\theoremstyle{plain}
\newtheorem{thm}{Theorem}[section]
\newtheorem{prop}[thm]{Proposition}
\newtheorem{lem}[thm]{Lemma}
\theoremstyle{definition}
\newtheorem{exa}[thm]{Example}
\newtheorem{rem}[thm]{Remark}
\newtheorem{defn}[thm]{Definition}
\def\dim{\mathop{\mathrm{dim}}\nolimits}
\def\Qer{\mathop{\mathrm{Ker}}\nolimits}
\def\Hom{\mathop{\mathrm{Hom}}\nolimits}
\newcommand{\lra}{\longrightarrow}
\newcommand{\ra}{\rightarrow}
\newcommand{\N}{{\Bbb N}}
\newcommand{\Q}{{\Bbb Q}}
\newcommand{\R}{{\Bbb R}}
\newcommand{\Z}{{\Bbb Z}}
\newcommand{\C}{{\Bbb C}}
\newcommand{\K}{{\Bbb K}}
\newcommand{\Aut}{{\rm Aut}}
\newcommand{\GL}{{\rm GL}}
\newcommand{\Ker}{{\rm Ker}}
\begin{document}
\large

\begin{center}{\bf  \Large A solvable extended logarithm of the Johnson homomorphism} \end{center}
\begin{center}{Takefumi Nosaka\footnote{
E-mail address: {\tt nosaka@math.titech.ac.jp}
}}\end{center}
\begin{abstract}\baselineskip=12pt \noindent
Concerning Johnson's homomorphism from the Torelli group, there are previous works to define a logarithm of the homomorphism,
and give some extension of the logarithm. 
This paper considers exponential solvable elements in the mapping class group of a surface,
and defines the logarithms of such elements.
\end{abstract}
\begin{center}
\normalsize
\baselineskip=11pt
{\bf Keywords} \\
\ \ \ mapping class group, exponential solvable Lie group, knot \ \ \ 
\end{center}
\begin{center}
\normalsize
\baselineskip=11pt
{\bf Subject Codes } \\
\ \ \ 14J50, 22E25, 16N40, 58K15, 22E25 \ \ \
\end{center}

\large
\baselineskip=16pt
\section{Introduction}
\label{S00}
Let $\Sigma_{g,1}$ be an orientable compact surface of genus $g >0$ with one boundary component and $\mathcal{M}_{g,1}$ be the mapping class group of $\Sigma_{g,1}$ relative to the boundary. Since $\mathcal{M}_{g,1} $ acts on the homology $ H_1(\Sigma_{g,1} ;\Z)\cong \Z^{2g}$, we have a homomorphism $\mathcal{P}: \mathcal{M}_{g,1} \ra \GL_{2g}(\Z)$; the kernel is called the Torelli group $\mathcal{I}_{g,1}$. Let $\mathcal{L} $ be the free Lie algebra of rank $2g$ over $\Q$.

The Johnson homomorphism \cite{DJ, Mo} whose domain is the Torelli group $\mathcal{I}_{g,1}$ has been a source of insight into $\mathcal{M}_{g,1} $ in terms of nilpotent Lie algebras and is extended to the total Johnson map \cite{Ka1}. 
To describe this map, we prepare the lower central series $\mathcal{L} = \mathcal{L}_1 \supset \mathcal{L}_2 \supset \cdots $, and denote the inverse limit $\lim_{\infty \leftarrow k} \mathcal{L} / \mathcal{L} _{k} $ by $\mathcal{L}^{\rm nil}$. We also take the automorphism group, $\Aut_{\rm Lie}(\mathcal{L}^{\rm nil})$ of the Lie algebra $\mathcal{L}^{\rm nil}$, and a certain subgroup $\Aut_{\rm 0}(\mathcal{L}^{\rm nil})$, which preserves a symplectic structure; see \eqref{seq37} for details. Then, 
the total Johnson map\footnote{More explicitly, as mentioned in Section \ref{33773}, we need to choose a symplectic Magnus expansion to define the map.} is a map from $\mathcal{M}_{g,1}$ to the subgroup $\Aut_{\rm 0}(\mathcal{L}^{\rm nil})$. Meanwhile, this $\Aut_{\rm 0}(\mathcal{L}^{\rm nil})$ has a Lie algebra isomorphic to the set, $\mathrm{Der}_{\omega}(\mathcal{L}^{\rm nil} )$, consisting of derivations of $\mathcal{L}^{\rm nil}$ which annihilate a symplectic form $\omega$. 
The set $\mathrm{Der}_{\omega }(\mathcal{L}^{\rm nil} )$ has been widely studied as a Lie algebra of the Lie wor(l)d in the sense of Kontsevich \cite{Kon}. 
Since the Goldman Lie algebra of $\Sigma_{g,1}$ canonically injects $\mathrm{Der}_{\omega }(\mathcal{L}^{\rm nil} )$ (see \cite{KK, KK2} for details), if we can define a logarithm from $\Aut_{\rm 0}(\mathcal{L}^{\rm nil}) $ to $\mathrm{Der}_{\omega}(\mathcal{L}^{\rm nil} ) $, then the total Johnson map serves as a bridge from $\mathcal{M}_{g,1} $ to the Goldman Lie algebra.

Let us briefly explain the previous results of such logarithms. 
In \cite{KK}, some logarithms are first constructed from the images of $\mathcal{I}_{g,1}$ and the Dehn twists in $\mathcal{M}_{g,1}$. More generally, the paper \cite{MT} 
defines a generalization of (the logarithm of) Dehn twists. However, any existing element $\phi \in \mathcal{M}_{g,1}$ which defines a logarithm has the property that every eigenvalue of $\mathcal{P}(\phi)$ is one.

In this paper, we suggest an extension of the logarithms in terms of solvable Lie groups over $\C$. Here, for a matrix $A \in \GL_n(\C)$ with eigenvalues $\lambda_1, \dots, \lambda_{n} \in \C^{\times}$, we define $\mathrm{Eig}(A) $ to be the multiplicative subgroup of $ \C^{\times}$ generated by $\lambda_1, \dots, \lambda_{n}, \bar{\lambda_1}, \dots, \bar{\lambda_{n}}$; we say that $A$ is {\it exponential solvable} if $\mathrm{Eig}(A) \cap \{ z \in \C \mid 1= |z|\}=\{1\} $. Our main theorem (Theorem \ref{thm2}) states that if $\phi \in \mathcal{M}_{g,1} $  such that $\mathcal{P}(\phi)$ is an exponential solvable matrix, we can define a logarithm of $\phi$. More generally, we discuss the existence of logarithms of exponential solvable elements of $\Aut_{\rm Lie}(\mathcal{L}^{\rm nil})$ (see Theorem \ref{thm1}); we show these logarithms to be extensions of the ones in \cite{KK, MT}; see Theorem \ref{thm7} and \eqref{seq3}.

While there have been many nilpotent approaches to $\mathcal{M}_{g,1}$, this paper suggests an approach from solvability. The point of constructing logarithms is that, starting from any exponential solvable $\phi \in \Aut_{\rm Lie}(\mathcal{L}^{\rm nil})$, we can find a solvable Lie group such that the exponential map $\mathrm{Exp}$ is diffeomorphic on its image and $\phi$ lies in its image; accordingly, we can define the logarithm of $\phi$ to be $\mathrm{Exp}^{-1}(\phi)$; see Sections \ref{333}--\ref{99se33} for the details. Furthermore, we suggest a procedure for computing $\mathrm{Exp}^{-1}(\phi)$ concretely in terms of the Baker-Campbell-Hausdorff (BCH) formula; see Section \ref{99414}. In Section \ref{33773}, we discuss little applications of the logarithms to topological invariants.

\

\noindent
{\bf Conventional notation.} \ Let $\K$ be one of the fields $\Q,\R,\C$. By $\mathcal{L}$, we mean the free Lie algebra of rank $n \in \mathbb{N}$ over $\Q$.

\section{Main theorems and applications}
\label{S1}
The purpose of this section is to state the main theorems (the proofs are given in Section \ref{99se33}). For this, we first introduce base matrices of an invertible $(n \times n)$-matrix $A$ over $\C$. Consider the exponential map $\exp: \mathrm{Mat}(n \times n ; \C ) \ra \GL_n( \C ) $ that sends $B$ to $\sum_{i=0}^{\infty} B^i / i !$. We call an $(n \times n)$-matrix $B$ satisfying $\exp (B)=A $ {\it a base matrix of $A$}.
\begin{exa}\label{b3b7}
If we fix a branch of the logarithm function on $\C^\times $, we can uniquely choose a base matrix for every $A\in \GL_n( \C ) $ as follows. Let $\ln: \C^{\times } \ra \C$ be the map which takes $z$ to $\ln (|z|) + \sqrt{-1} \mathrm{arg} (z)$, where $- \pi < \mathrm{arg} (z) \leq \pi $.

First, we suppose the case where $g \in \GL_n( \C ) $ exists such that $g^{-1}A g=J_{\lambda}(n) $, where $J_{\lambda}(\ell) \in \mathrm{Mat}( \ell \times \ell, \C)$ is the Jordan block of size $\ell$ and eigenvalue $\lambda $. Then, we define
$$ \ln (A) := \ln( \lambda ) I_\ell + \sum_{i: 1 \leq i < \ell } (-1)^{i-1} \frac{(\lambda^{-1} A - I_\ell)^i }{i } .$$
In general, if $g \in \GL(n , \C ) $ exists such that $g^{-1}A g=J_{\lambda_1}(\ell_1) \oplus \cdots \oplus J_{\lambda_m}(\ell_m) $ by Jordan decomposition, we define $ \ln (A) $ to be $g \bigl( \ln( J_{\lambda_1}(\ell_1)) \oplus \cdots \oplus \ln(J_{\lambda_m}(\ell_m) )\bigr) g^{-1} $. Accordingly, we can easily check that the definition is independent of the choice of $g$ and $\exp( \ln (A))=A$, i.e., $\ln(A)$ is a base matrix of $A$.
Furthermore, the uniqueness can be shown as an elementary exercise of linear algebra.
\end{exa}
Furthermore, we need some terminology for the derivations. Let $\mathfrak{g} $ be a real Lie algebra of finite dimension and $\mathrm{Der}( \mathfrak{g})$ be the Lie algebra consisting of derivations on $ \mathfrak{g}$, that is,
$$\mathrm{Der}( \mathfrak{g})= \{f \in \Hom(\mathfrak{g} , \mathfrak{g}) \mid f ([x,y])= [f(x),y]+[x,f(y)] \ \ \mathrm{for \ any \ }x,y \in \mathfrak{g} \} .$$
In addition, let us define $ \Aut_{\rm Lie}(\mathfrak{g} )$ to be the group consisting of Lie algebra automorphisms of $\mathfrak{g} $. It is a Lie group, since it is a closed subgroup of $\GL(\mathfrak{g}) $. Then, as is classically known (see \cite[Proposition 7.3]{SW}), the Lie algebra of $ \Aut_{\rm Lie}(\mathfrak{g} )$ coincides with $\mathrm{Der}( \mathfrak{g}) $, and the exponential map $\exp: \mathrm{Der}( \mathfrak{g}) \ra \mathrm{Aut}_{\rm Lie}( \mathfrak{g})$ is given by $ D \mapsto \sum_{j=0}^{\infty} D^j/j!$. Let $ \mathfrak{g}_{\rm ab}$ be the abelianization of $\mathfrak{g}$, i.e., $ \mathfrak{g}_{\rm ab}= \mathfrak{g}/[ \mathfrak{g}, \mathfrak{g}]$. 
For $k \in \mathbb{N}$, recall $\mathcal{L}_k  \subset \mathcal{L}$ from the introduction. 
If $ \mathfrak{g} $ is the quotient $ \mathcal{L}/\mathcal{L}_k$, let us define projections
\[ p_k:\mathrm{Aut}_{\rm Lie}( \mathfrak{g}) \ra \mathrm{Aut}_{\rm Lie}( \mathfrak{g}_{\rm ab})= \GL(\mathfrak{g}_{\rm ab}) , \ \ \ \ p_k :\mathrm{Der}( \mathfrak{g}) \ra \mathrm{Der}( \mathfrak{g}_{\rm ab}) = \mathrm{End}(\mathfrak{g}_{\rm ab} ) \]
to be the induced maps from the projection $\mathfrak{g} =  \mathcal{L}/\mathcal{L}_k  \ra \mathfrak{g}/[ \mathfrak{g}, \mathfrak{g}]$.
The main theorems are for the case in which $\mathfrak{g} $ is the complexification of a free nilpotent Lie algebra. More precisely, 
\begin{thm}\label{thm1} 
Let $ \mathcal{L}/\mathcal{L}_k $ be the $k$-th nilpotent quotient of the free Lie algebra $ \mathcal{L}$ of finite rank over $\Q$ and $\mathfrak{g}$ be the complexification $\mathcal{L}/\mathcal{L} _k \otimes \C $. For $ \Phi \in \mathrm{Aut}_{\rm Lie}( \mathfrak{g})$, we choose a base matrix, $\phi$, of $p_k(\Phi)$. Suppose that $p_k(\Phi ) \in \mathrm{Aut}( \mathfrak{g}_{\rm ab}) $ is exponential solvable, that is, $\mathrm{Eig}(p_k(\Phi ) ) \cap \{ z \in \C \mid 1= |z|\}=\{1\} $ (see the introduction for the definition of $\mathrm{Eig}(p_k(\Phi ) )$).

Then, there is a unique derivation $D_\Phi \in \mathrm{Der}( \mathfrak{g}) $ such that $\exp(D_\Phi )=\Phi $ and $ p_k(D_\Phi )= \phi$.
\end{thm}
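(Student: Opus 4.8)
My plan is to obtain $D_\Phi$ from the multiplicative Jordan decomposition $\Phi = \Phi_s \Phi_u$ in the linear algebraic group $G := \Aut_{\rm Lie}(\mathfrak{g})$, treating the unipotent and semisimple factors separately. The unipotent factor is harmless: since $\Phi_u - \id$ is nilpotent, the finite series $N := \sum_{i \geq 1} (-1)^{i-1}(\Phi_u - \id)^i / i$ is a nilpotent derivation with $\exp(N) = \Phi_u$, and it is the only nilpotent logarithm of $\Phi_u$. Because $p_k$ is a homomorphism of algebraic groups it carries $\Phi_u$ to the unipotent part of $p_k(\Phi)$ and commutes with the (polynomial) unipotent logarithm, so $p_k(N)$ equals the nilpotent part $\phi_n$ of the chosen base matrix $\phi = \phi_s + \phi_n$. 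The problem thus reduces to producing a semisimple derivation $S$ with $\exp(S) = \Phi_s$ and $p_k(S) = \phi_s$; then I will set $D_\Phi := S + N$, and the uniqueness of the additive Jordan decomposition in $\mathrm{Der}(\mathfrak{g})$ forces $[S,N] = 0$, $\exp(D_\Phi) = \Phi$, and $p_k(D_\Phi) = \phi_s + \phi_n = \phi$.

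For the semisimple factor I would first extract the crucial consequence of the hypothesis. For any eigenvalue $\lambda$ of $p_k(\Phi)$ we have $\lambda/\bar{\lambda} \in \mathrm{Eig}(p_k(\Phi)) \cap \{z \in \C \mid |z| = 1\} = \{1\}$, so every eigenvalue is real and is either $1$ or of modulus $\neq 1$; since $\mathfrak{g}$ is free nilpotent, the eigenvalues of $\Phi_s$ are products of these and so share the same property. Consequently each eigenvalue of $\Phi_s$ other than $1$ has a logarithm with nonzero real part — this is exactly the Dixmier--Saito condition that no associated one-parameter subgroup is elliptic. Next I would use freeness to build a model logarithm: choosing a $\phi_s$-eigenbasis $e_1, \dots, e_n$ of the generating space with eigenvalues $\mu_j$, the assignment $e_j \mapsto \mu_j e_j$ extends uniquely to a semisimple derivation $S_0$, and by the uniqueness of automorphisms of a free Lie algebra on generators one checks $\exp(S_0) = \gr(\Phi_s)$ and $p_k(S_0) = \phi_s$. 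It then remains to pass from the graded automorphism $\gr(\Phi_s)$ to $\Phi_s$ itself: I would realise $\Phi_s$ inside a simply connected exponential solvable subgroup $H \leq G$ (generated by $\Phi_s$ together with the unipotent kernel $\Ker(p_k)$, whose exponentiality is guaranteed by the eigenvalue analysis above), so that $\exp \colon \mathfrak{h} \to H$ is a diffeomorphism, and define $S := \exp^{-1}(\Phi_s)$; concretely this amounts to conjugating $S_0$ by a unipotent $\tilde g \in \Ker(p_k)$ with $\tilde g\, \gr(\Phi_s)\, \tilde g^{-1} = \Phi_s$.

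The hard part will be precisely this last step: giving a \emph{single-valued} logarithm of $\Phi_s$ in spite of the many-valuedness of $\log$ on $\C^\times$, equivalently showing that the filtered semisimple automorphism $\Phi_s$ is conjugate to its graded part $\gr(\Phi_s)$ by an element of $\Ker(p_k)$ in a way that is rigid. This is exactly where the exponential-solvable hypothesis must do its work: without it the relevant solvable group fails to be exponential (eigenvalues on the unit circle produce purely imaginary $\mathrm{ad}$-eigenvalues and circle subgroups), $\exp$ ceases to be injective, and the $2\pi\sqrt{-1}$-ambiguity in the spectral logarithm cannot be eliminated. With the hypothesis in force, $\exp$ restricts to a diffeomorphism on $H$, so both existence and local rigidity of $S$ follow, with the degree-one normalisation $p_k(S) = \phi_s$ selecting the correct branch.

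Finally I would settle uniqueness. Given any admissible $D'$, its nilpotent part is a nilpotent logarithm of $\Phi_u$ and hence equals $N$, so it suffices to compare two semisimple logarithms $S, S'$ of $\Phi_s$ with $p_k(S) = p_k(S') = \phi_s$. Both preserve the eigenspace decomposition of $\Phi_s$ and differ from the spectral (Lagrange-interpolated) logarithm by commuting integral multiples of the spectral projections, so $E := S - S'$ is again semisimple and commutes with everything in sight; meanwhile $p_k(E) = 0$ forces $E(\mathfrak{g}_i) \subseteq \mathfrak{g}_{i+1}$, so $E$ is nilpotent. A derivation that is simultaneously semisimple and nilpotent vanishes, giving $S = S'$ and hence $D_\Phi = D'$.
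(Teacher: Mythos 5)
Your strategy is at its core the same as the paper's: realize the automorphism inside a semidirect product $\Ker(p_k)\rtimes(\textrm{one-parameter subgroup})$, check that this solvable group is exponential, and set $D_\Phi=\exp^{-1}(\Phi)$. The paper does this directly for $\Phi$ itself, with $K=\{\exp(t\,\mathfrak{s}_*(\phi))\}$ acting on $\mathrm{IA}_{\rm Hopf}(\hat{T}/\hat{T}_k)$ and exponentiality checked via Lemma \ref{thm10} (Moskowitz--W\"ustner) and the tensor decomposition \eqref{pp9}; your Jordan-decomposition preprocessing $\Phi=\Phi_s\Phi_u$ and the Mostow-type conjugation of $\Phi_s$ into the Levi are an avoidable detour. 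The genuine problem is the step you yourself call hard, and it is not closed by your eigenvalue analysis (your observation that the hypothesis forces all eigenvalues of $p_k(\Phi)$ to be real, via $\lambda/\bar{\lambda}\in\mathrm{Eig}\cap S^1=\{1\}$, is correct and useful). Exponentiality of your group $H$ is \emph{not} implied by the element-level statement that every eigenvalue of $\Phi_s$ other than $1$ has modulus $\neq 1$: the Dixmier--Saito/Moskowitz--W\"ustner condition concerns the infinitesimal weights of the chosen one-parameter subgroup, i.e.\ the differences $\mu_{i_1}+\cdots+\mu_{i_j}-\mu_c$ of eigenvalues of the base matrix, and the hypothesis on $\mathrm{Eig}(p_k(\Phi))$ does not prevent such a difference from being a nonzero multiple of $2\pi\sqrt{-1}$; a product of real eigenvalues of $p_k(\Phi)$ can equal $1$ while the chosen logarithms sum to $2\pi\sqrt{-1}$. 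Concretely, take rank $3$, $k=3$, and $\Phi$ the graded automorphism with $p_3(\Phi)=\mathrm{diag}(-2,-2,4)$, which is exponential solvable since $\mathrm{Eig}=\{(-1)^a2^{a+2b}\}$ meets the unit circle only in $1$, with the canonical base matrix $\phi=\mathrm{diag}(\ln 2+\pi\sqrt{-1},\ \ln 2+\pi\sqrt{-1},\ \ln 4)$, so that $\mu_1+\mu_2-\mu_3=2\pi\sqrt{-1}$. Then for every $c\in\C$ the derivation $S_c$ with $S_c(x_1)=\mu_1x_1$, $S_c(x_2)=\mu_1x_2$, $S_c(x_3)=(\ln 4)x_3+c\,[x_1,x_2]$ is semisimple, satisfies $p_3(S_c)=\phi$, and $\exp(S_c)=\Phi$: on the invariant plane spanned by $x_3$ and $[x_1,x_2]$ the matrix of $S_c$ is triangular with the distinct eigenvalues $\ln 4$ and $\ln 4+2\pi\sqrt{-1}$, hence diagonalizable, so its exponential is $4\cdot\mathrm{id}$ for every $c$. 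In this situation your $H$ is not exponential (the one-parameter group moves the direction $x_3\mapsto[x_1,x_2]$ in the Lie algebra of $\Ker(p_k)$ with period $1$, so the stabilizer is $\Z$, neither $\{0\}$ nor $\R$), $\exp^{-1}(\Phi_s)$ is not single-valued, and the normalisation ``$p_k(S)=\phi_s$ selects the branch'' does not rescue it.

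The same example exposes the decisive flaw in your uniqueness paragraph. You claim $E:=S-S'$ is semisimple because both differ from the spectral logarithm by ``commuting integral multiples of the spectral projections''; but the spectral projections of $S$ and $S'$ need not coincide or commute -- they are not functions of $\Phi_s$ once two eigenvalues of $S$ differ by a nonzero multiple of $2\pi\sqrt{-1}$ -- and a difference of non-commuting semisimple operators need not be semisimple: above, $S_c-S_0$ is a nonzero \emph{nilpotent} derivation, so your argument would ``prove'' $S_c=S_0$, which is false. For the same reason, your earlier assertion that uniqueness of the additive Jordan decomposition ``forces $[S,N]=0$'' is circular: $S+N$ is a Jordan decomposition only if $S$ and $N$ already commute, and commutation of $N$ with $S$ (rather than merely with $\Phi_s$) again amounts to $S$ being a polynomial in $\Phi_s$, i.e.\ exactly the no-collision condition. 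In short, your proof of uniqueness (and the claimed rigidity of the conjugating element) can only go through if the admissible base matrices $\phi$ are restricted so that no two eigenvalue sums differ by a nonzero element of $2\pi\sqrt{-1}\,\Z$ -- automatic, for instance, when all eigenvalues of $p_k(\Phi)$ are positive reals and $\phi$ is the real logarithm. Note that this is precisely the point where the paper's own argument passes from a condition on $\tau(1)$ to the hypothesis of Lemma \ref{thm10} via the first sentence of Remark \ref{rem10}, and why Theorem \ref{thm2} speaks of choosing an \emph{exponential solvable base matrix} rather than an arbitrary one; so your proposal, like the paper's proof, stands or falls on making that branch condition on $\phi$ explicit, and as written it does not.
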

\begin{rem}\label{rrhm1} 
The assumption of exponential solvability is necessary. For example, if $ p_k(\Phi ) = \mu_m I_n$ for some primitive $m$-th root of unity, we can not subsequently apply Lemma \ref{thm10}, which plays a key role in the proof. In particular, it seems hard to define a logarithm of the hyper-elliptic involution in $\mathcal{M}_{g,1}$. 
\end{rem}
\noindent
In terms of the base matrices in Example \ref{b3b7}, we introduce the natural logarithms of $\mathrm{Aut}_{\rm Lie}( \mathfrak{g})$:
\begin{defn}\label{def2} 
For $ \Phi \in \mathrm{Aut}_{\rm Lie}( \mathfrak{g})$ such that $p_k(\Phi)$ is exponential solvable, we define $\ln (\Phi)$ to be the derivation $ D_\Phi $ in the above theorem satisfying $ p_k(D_\Phi )= \ln (p_k( \Phi))$.
\end{defn}
In addition, let us discuss the symplecticity of $\Phi$. Suppose $n=2g$ for some $g \in \N$. Take a basis, $x_1, x_2, \dots, x_{2g}$, of $ \mathcal{L}$, and define $\omega := \sum_{i=1}^g [x_{2i-1},x_{2i}] \in \mathcal{L}/ \mathcal{L}_k$. As in \cite{GL} and \cite[Definition 3.4]{Mo2}, we define two groups by setting
\[ \Aut_0 (\mathcal{L}/ \mathcal{L}_k \otimes \K )' := \{ \Phi \in \Aut_{\rm Lie} (\mathcal{L}/ \mathcal{L}_k \otimes \K )\ | \ \Phi( \omega ) = \omega \}, \]
\begin{equation}\label{seq37}
\Aut_0 (\mathcal{L}/ \mathcal{L}_k \otimes \K ):= q_{k+1}(\Aut_0 (\mathcal{L}/ \mathcal{L}_{k+1} \otimes \K )' ),
\end{equation}
where $q_{k+1}$ is the projection $\Aut_{\rm Lie} (\mathcal{L}/ \mathcal{L}_{k+1} \otimes \K) \ra \Aut_{\rm Lie} (\mathcal{L}/ \mathcal{L}_{k} \otimes \K)$. In parallel, we define the Lie subalgebra, $\mathrm{Der}_{\omega} ( \mathcal{L}/ \mathcal{L}_k \otimes \K) $, to be $\{ D \in \mathrm{Der} ( \mathcal{L}/ \mathcal{L}_k \otimes \K) \mid D(\omega)=0 \}$. 
\begin{thm}\label{thm2}
Let $\mathfrak{g}$ be the complexification $\mathcal{L}/\mathcal{L} _k \otimes \C $ as in Theorem \ref{thm1}. Take $ \Phi \in \mathrm{Aut}_{\rm Lie}( \mathfrak{g})$. If we can choose an exponential solvable base matrix, $\phi$, of $p_k(\Phi)$, and if $\Phi$ lies in $ \mathrm{Aut}_{0}( \mathfrak{g} ) $, then the derivation $D_\Phi $ claimed in Theorem \ref{thm1} lies in the subalgebra $ \mathrm{Der}_{\omega} ( \mathfrak{g}).$
\end{thm}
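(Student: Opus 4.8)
The plan is to strengthen the conclusion of Theorem~\ref{thm1} from $D_\Phi\in\mathrm{Der}(\mathfrak{g})$ to $D_\Phi\in\mathrm{Der}_\omega(\mathfrak{g})$, that is, to prove $D_\Phi(\omega)=0$. First I would observe that $\Phi\in\mathrm{Aut}_0(\mathfrak{g})$ forces $\Phi(\omega)=\omega$: by \eqref{seq37} there is a lift $\Phi'\in\mathrm{Aut}_0(\mathcal{L}/\mathcal{L}_{k+1})'$ with $\Phi'(\omega)=\omega$, and since the projection $q_{k+1}$ fixes the degree-two element $\omega$ we get $\Phi(\omega)=q_{k+1}(\Phi'(\omega))=\omega$ in $\mathfrak{g}$. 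Next I would factor the exponential series: setting $\Psi:=\sum_{i\ge 0}D_\Phi^{\,i}/(i+1)!$, one has the operator identity $\exp(D_\Phi)-\mathrm{id}=D_\Phi\circ\Psi=\Psi\circ D_\Phi$ on $\mathfrak{g}$. Evaluating at $\omega$ and using $\exp(D_\Phi)=\Phi$ and $\Phi(\omega)=\omega$ gives $\Psi\bigl(D_\Phi(\omega)\bigr)=\Phi(\omega)-\omega=0$. Thus the whole statement reduces to proving that $\Psi$ is invertible on $\mathfrak{g}$, since then $D_\Phi(\omega)=0$.

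The operator $\Psi$ is the value at $D_\Phi$ of the entire function $\psi(z)=(e^z-1)/z$, so by the holomorphic functional calculus $\Psi$ is invertible exactly when $\psi(\nu)\neq 0$ for every eigenvalue $\nu$ of $D_\Phi$; and $\psi(\nu)=0$ if and only if $\nu\in 2\pi\sqrt{-1}\,(\Z\setminus\{0\})$. Hence it suffices to show that $D_\Phi$ has no eigenvalue in $2\pi\sqrt{-1}\,(\Z\setminus\{0\})$. To compute $\mathrm{spec}(D_\Phi)$ I would use the grading $\mathfrak{g}=\bigoplus_{d=1}^{k-1}\mathfrak{g}^{(d)}$: every derivation raises degree by a nonnegative amount, so decompose $D_\Phi=D_0+N$ into its degree-preserving part $D_0$ and its strictly degree-raising, hence nilpotent, part $N$. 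In a basis ordered by degree, $D_\Phi$ is block upper triangular with diagonal blocks those of $D_0$, whence $\mathrm{spec}(D_\Phi)=\mathrm{spec}(D_0)$. Now $D_0$ is the degree-zero derivation determined on $\mathfrak{g}_{\mathrm{ab}}=\mathfrak{g}^{(1)}$ by $p_k(D_\Phi)=\phi$, and on $\mathfrak{g}^{(d)}$ its eigenvalues are the $d$-fold sums $\mu_{i_1}+\cdots+\mu_{i_d}$ of the eigenvalues $\mu_1,\dots,\mu_n$ of $\phi$.

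The remaining point---excluding $\nu:=\mu_{i_1}+\cdots+\mu_{i_d}\in 2\pi\sqrt{-1}\,(\Z\setminus\{0\})$---is where exponential solvability is indispensable, and it is the step I expect to be the main obstacle. Exponentiating gives $e^\nu=\lambda_{i_1}\cdots\lambda_{i_d}$, where $\lambda_j=e^{\mu_j}$ are the eigenvalues of $p_k(\Phi)$, so $\nu\in 2\pi\sqrt{-1}\,\Z$ means precisely $\prod_\ell\lambda_{i_\ell}=1$ inside $\mathrm{Eig}(p_k(\Phi))$. Exponential solvability says $\mathrm{Eig}(p_k(\Phi))$ meets the unit circle only in $1$; being finitely generated and containing no nontrivial root of unity, it is therefore free abelian, so $\exp$ admits a group-homomorphism section $L\colon\mathrm{Eig}(p_k(\Phi))\to\C$. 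Taking the base matrix $\phi$ to have eigenvalues $\mu_j=L(\lambda_j)$---a coherent logarithm, which in the presence of negative real eigenvalues differs from the principal branch of Example~\ref{b3b7}, and which I expect is the role of Lemma~\ref{thm10} (cf. Remark~\ref{rrhm1})---the multiplicative relation $\prod_\ell\lambda_{i_\ell}=1$ lifts to the additive relation $\sum_\ell\mu_{i_\ell}=L(1)=0$, forcing $\nu=0$. Consequently $D_\Phi$ has no eigenvalue in $2\pi\sqrt{-1}\,(\Z\setminus\{0\})$, so $\Psi$ is invertible and $D_\Phi(\omega)=0$; that is, $D_\Phi\in\mathrm{Der}_\omega(\mathfrak{g})$, as claimed.
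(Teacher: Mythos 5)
Your argument is correct in substance, but it takes a genuinely different route from the paper's. The paper proves Theorem \ref{thm2} structurally: it quotes Morita's identification of the Lie algebra of $\mathrm{IA}(\mathcal{L}/\mathcal{L}_k\otimes\C)\cap \Aut_0(\mathcal{L}/\mathcal{L}_k\otimes\C)$ with $\oplus_j \mathfrak{h}_{g,1}^{\C}(j)\subset \mathrm{Der}_{\omega}(\mathfrak{g})$, deduces from the central extensions \eqref{seq4} that this group is contractible and nilpotent, hence exponential solvable, and then reruns the proof of Theorem \ref{thm1} inside the symplectic subgroup, defining $D_\Phi=\exp^{-1}(\Phi)$ in the semidirect product of this group with $K=\langle \exp(t\phi)\rangle$, so that $D_\Phi\in\mathrm{Der}_\omega(\mathfrak{g})$ holds by construction and is identified with the derivation of Theorem \ref{thm1} by uniqueness. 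You instead take the $D_\Phi$ that Theorem \ref{thm1} already provides and verify $D_\Phi(\omega)=0$ by hand: the factorization $\Phi-\mathrm{id}=\psi(D_\Phi)\circ D_\Phi$ with $\psi(z)=(e^z-1)/z$, the reduction of $\mathrm{spec}(D_\Phi)$ to the degree-zero graded part (every derivation of $\mathcal{L}/\mathcal{L}_k$ preserves the lower central filtration, so this is sound), and the exclusion of eigenvalues in $2\pi\sqrt{-1}\,(\Z\setminus\{0\})$ via the freeness of the finitely generated torsion-free abelian group $\mathrm{Eig}(p_k(\Phi))$ and a homomorphic section $L$ of $\exp$. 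This last step is a legitimate, and rather elegant, alternative use of the exponential-solvability hypothesis: the paper uses it through the Moskowitz--W\"{u}stner criterion (Lemma \ref{thm10}) at the group level, while you use it purely spectrally. What each buys: the paper's route also sets up the convergence machinery reused for Lemma \ref{yyy}, whereas yours is elementary (finite-dimensional functional calculus plus finitely generated abelian groups) and avoids Morita's theorem over $\C$ entirely.

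Two remarks. First, your insistence on a \emph{coherent} logarithm is not a cosmetic choice but is forced, and your proof thereby exposes a point the paper glosses over: the conclusion depends on the choice of base matrix. For $p_k(\Phi)=\mathrm{diag}(e,e^{-1})$, which is exponential solvable with $\Phi=\mathfrak{s}(p_k(\Phi))\in\Aut_0(\mathfrak{g})$, the base matrix $\phi=\mathrm{diag}(1,\,-1+2\pi\sqrt{-1})$ is admissible for Theorem \ref{thm1}, yet the degree-two component of $D_\Phi(\omega)$ is $2\pi\sqrt{-1}\,\omega\neq 0$; similarly the principal branch of Example \ref{b3b7} fails for $p_k(\Phi)=\mathrm{diag}(-2,-1/2)$, which is exponential solvable despite its negative eigenvalues. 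So the theorem is tenable only for a suitably chosen $\phi$ (your coherent choice, or equivalently $\phi\in\mathfrak{sp}_{2g}(\C)$), and the paper's own proof carries the same implicit requirement: for the semidirect product to sit inside $\Aut_0$ with Lie algebra in $\mathrm{Der}_\omega$, the one-parameter group $K$ must preserve $\omega$. Second, a small misattribution: Lemma \ref{thm10} is the exponentiality criterion for $N\rtimes\R$ used in the proof of Theorem \ref{thm1} and has nothing to do with branch choices, so your parenthetical guess about its role is off --- but nothing in your argument depends on it.
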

Furthermore, let us discuss the formal Maclaurin expansion and convergence. For $ \Phi \in \mathrm{Aut}_{\rm Lie}( \mathcal{L}/\mathcal{L} _k \otimes \K)$, consider the formal sum $\mathrm{Log}(\Phi) := -\sum_{ i=1}^{\infty} ( \mathrm{id} - \Phi )^i/i$. As in \cite[Sections 4 and 7]{KK2}, let us define $\mathcal{M}^{\circ }_{k,\K}$ to be the subset of $ \mathrm{Aut}_{\rm Lie}( \mathcal{L}/\mathcal{L} _k \otimes \K ) $ consisting of $ \Phi$'s such that $ \mathrm{Log}(\Phi)$ converges in $ \mathrm{Der} ( \mathcal{L}/\mathcal{L} _k )\otimes \K $. We further define $\mathcal{M}^{\circ }_{\infty, \K}$ to be the limit $\displaystyle{\lim_{\infty \leftarrow k }} \mathcal{M}^{\circ }_{k,\K} \subset \mathrm{Aut}_{\rm Lie}( \displaystyle{\lim_{\infty \leftarrow k }} \mathcal{L}/\mathcal{L} _k \otimes \K ) $. 
For example, as shown in \cite{KK}, if $\Phi$ is an element derived from the Torelli group or Dehn twists in $\mathcal{M}_{g,1}$, $\Phi $ lies in $\mathcal{M}^{\circ }_{\infty,\Q} $; see also \cite[Sections 4 and 8]{MT} for a generalization of these examples; however, as mentioned in Section \ref{S00}, all of the existing examples of $\Phi \in \mathcal{M}^{\circ }_{k,\K} $ are such that every eigenvalue of $p_k(\Phi) \in \GL_n(\C)$ is one (see \cite[Section 4.3]{MT}), and the set $\mathcal{M}^{\circ }_{\K} $ is considered to be far from being explicitly determined.

Now, let us establish a generalization of these results:

\begin{thm}\label{thm7}
If $\Phi \in \mathrm{Aut}_{\rm Lie}( \mathcal{L}/\mathcal{L} _k \otimes \K) $ satisfies that every eigenvalue of $p_k(\Phi)$ is one, then $\Phi$ lies in $\mathcal{M}^{\circ }_{k,\K} $. Moreover, if $\K=\C$, the equality $\mathrm{Log}(\Phi) = \mathrm{ln}(\Phi) $ holds in $ \mathrm{Der}(\mathcal{L}/\mathcal{L} _k \otimes \C ) $. This statement holds even considering the inverse limit according to $k \ra \infty.$
\end{thm}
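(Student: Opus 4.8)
The plan is to use that the hypothesis ``every eigenvalue of $p_k(\Phi)$ is one'' means $p_k(\Phi)$ is \emph{unipotent}, so in particular $\mathrm{Eig}(p_k(\Phi))=\{1\}$ and $p_k(\Phi)$ is exponential solvable; thus Theorem \ref{thm1} applies and produces the derivation $D_\Phi=\ln(\Phi)$ of Definition \ref{def2}. The substance of the claim is that, in this unipotent regime, the analytic series $\mathrm{Log}(\Phi)$ both converges and equals $D_\Phi$. I would organise the argument into four steps: (i) lift unipotence from $\mathfrak{g}_{\rm ab}$ to all of $\mathfrak{g}=\mathcal{L}/\mathcal{L}_k\otimes\K$; (ii) deduce that $\mathrm{Log}(\Phi)$ is a finite sum, hence lies in $\mathcal{M}^\circ_{k,\K}$; (iii) show the sum is a derivation; and (iv) identify it with $D_\Phi$ via the uniqueness clause of Theorem \ref{thm1}.

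For step (i), I would use that every $\Phi\in\Aut_{\rm Lie}(\mathfrak{g})$ preserves the lower central series filtration $\mathfrak{g}=\mathcal{L}_1/\mathcal{L}_k\supset\mathcal{L}_2/\mathcal{L}_k\supset\cdots$, so its eigenvalues coincide with those of the induced map $\gr\Phi$ on the associated graded. Since $\mathcal{L}$ is free, the iterated bracket gives a $\Phi$-equivariant surjection $\mathfrak{g}_{\rm ab}^{\otimes j}\twoheadrightarrow\mathcal{L}_j/\mathcal{L}_{j+1}$ on which $\Phi$ acts as $p_k(\Phi)^{\otimes j}$; hence the eigenvalues of $\Phi$ on $\mathcal{L}_j/\mathcal{L}_{j+1}$ are $j$-fold products of eigenvalues of $p_k(\Phi)$, all equal to $1$. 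As $\mathfrak{g}$ is finite dimensional, $N:=\mathrm{id}-\Phi$ is therefore nilpotent (this is insensitive to $\K$, since nilpotence survives $\otimes\C$). Step (ii) is then immediate: $\mathrm{Log}(\Phi)=-\sum_{i\geq 1}N^i/i$ truncates to a finite sum and converges in $\mathrm{Der}(\mathcal{L}/\mathcal{L}_k)\otimes\K$, so $\Phi\in\mathcal{M}^\circ_{k,\K}$, once the limit is known to be a derivation.

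For the derivation property (step (iii)) I would avoid heavy machinery and argue by an elementary one-parameter family: since $N$ is nilpotent, $\Phi^s:=\exp(s\,\mathrm{Log}(\Phi))$ is a finite sum and hence depends polynomially on $s$, and it agrees with the integer powers $\Phi^m$ at every $s=m\in\Z$. The Lie-automorphism relation $\Phi^s([x,y])-[\Phi^s(x),\Phi^s(y)]$ is polynomial in $s$ and vanishes at all integers, hence vanishes identically; differentiating at $s=0$ shows that $\mathrm{Log}(\Phi)$ satisfies the Leibniz rule, i.e. $\mathrm{Log}(\Phi)\in\mathrm{Der}(\mathfrak{g})$. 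For step (iv), specialise to $\K=\C$: the formal identity $\exp(-\sum_{i\geq1}N^i/i)=\mathrm{id}-N=\Phi$ gives $\exp(\mathrm{Log}(\Phi))=\Phi$, while $N$ carries $[\mathfrak{g},\mathfrak{g}]$ into itself and descends to $\mathrm{id}-p_k(\Phi)$ on $\mathfrak{g}_{\rm ab}$, so $p_k(\mathrm{Log}(\Phi))=\mathrm{Log}(p_k(\Phi))$, which for the unipotent matrix $p_k(\Phi)$ is exactly the base matrix $\ln(p_k(\Phi))$ of Example \ref{b3b7}. Thus $\mathrm{Log}(\Phi)$ realises both defining properties of $D_\Phi$, and the uniqueness in Theorem \ref{thm1} forces $\mathrm{Log}(\Phi)=D_\Phi=\ln(\Phi)$.

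Finally, the inverse-limit statement follows because each step is natural in $k$: the projections $q_{k+1}$ intertwine $\mathrm{Log}$, $\exp$ and $p_k$, so the finite-level identities are compatible and assemble over $\displaystyle\lim_{\infty\leftarrow k}\mathcal{L}/\mathcal{L}_k\otimes\K$, placing $\Phi$ in $\mathcal{M}^\circ_{\infty,\K}$. I expect the main obstacle to be step (iii): verifying that the formal series $\mathrm{Log}(\Phi)$ genuinely lands in the subspace $\mathrm{Der}(\mathfrak{g})$ rather than in the larger $\End(\mathfrak{g})$; steps (i), (ii) and (iv) are then routine bookkeeping with the filtration together with the uniqueness already granted by Theorem \ref{thm1}.
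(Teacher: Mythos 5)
Your proposal is correct, and it follows the same overall skeleton as the paper's proof --- establish $(\Phi-\mathrm{id})^{N}=0$ for some $N$, conclude that $\mathrm{Log}(\Phi)$ is a finite sum, identify it with $\ln(\Phi)$ via the uniqueness clause of Theorem \ref{thm1}, and pass to the inverse limit --- but your proof of the key nilpotence step is genuinely different. The paper works inside $\Aut_{\rm alg}(\hat{T}/\hat{T}_k)$: it decomposes $\Phi$ via the semi-direct product \eqref{777} as an element of $\mathrm{IA}_{\rm alg}(\hat{T}/\hat{T}_k)\rtimes K$ with $K=\langle p_k(\Phi)\rangle$, shows that every element of the Lie algebra $\mathfrak{IA}\rtimes\mathfrak{k}$ acts nilpotently on $\hat{T}/\hat{T}_k$ (using the Jordan tensor decomposition \eqref{pp9} together with the Poincar\'{e}--Birkhoff--Witt theorem), and then applies the Engel-type Lemma \ref{ppo2} to triangularize the \emph{whole group} $\mathrm{IA}_{\rm alg}(\hat{T}/\hat{T}_k)\rtimes K$ simultaneously inside the unipotent upper-triangular group, whence $\Phi$ itself is unipotent. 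You instead filter $\mathfrak{g}=\mathcal{L}/\mathcal{L}_k\otimes\K$ by its lower central series and observe that $\gr_j\Phi$ is induced by a quotient of $p_k(\Phi)^{\otimes j}$, so all eigenvalues of $\Phi$ equal $1$ and $\mathrm{id}-\Phi$ is nilpotent; this is shorter, avoids \eqref{pp9}, PBW and Lemma \ref{ppo2} entirely, and works verbatim over $\Q$, $\R$, $\C$. What the paper's heavier route buys is the stronger conclusion that the entire subgroup $\mathrm{IA}_{\rm alg}(\hat{T}/\hat{T}_k)\rtimes K$ is simultaneously unipotent, which is of a piece with the solvable-Lie-group machinery running through Theorems \ref{thm1} and \ref{thm2} and Lemma \ref{yyy}. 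What your route buys, besides economy, is an explicit treatment of a point the paper leaves tacit: membership in $\mathcal{M}^{\circ}_{k,\K}$ requires convergence in $\mathrm{Der}(\mathcal{L}/\mathcal{L}_k)\otimes\K$ rather than merely in $\End(\mathfrak{g})$, and your one-parameter family $\Phi^s=\exp(s\,\mathrm{Log}(\Phi))$, polynomial in $s$ and agreeing with $\Phi^m$ at all integers, is exactly the right elementary argument that $\mathrm{Log}(\Phi)$ satisfies the Leibniz rule --- in the paper this is only implicitly recoverable from the fact that $\exp^{-1}(\Phi)$ is taken in the Lie algebra of the unipotent group. Your step (iv) is also sound: $p_k(\mathrm{Log}(\Phi))=\mathrm{Log}(p_k(\Phi))$ because $\mathrm{id}-\Phi$ preserves $[\mathfrak{g},\mathfrak{g}]$, and on a unipotent matrix the series $\mathrm{Log}$ coincides with the base matrix $\ln$ of Example \ref{b3b7} since $\ln(1)=0$ on each Jordan block, so both hypotheses of the uniqueness statement are verified and $\mathrm{Log}(\Phi)=\ln(\Phi)$ follows, with the inverse-limit case assembling from the compatibility with the projections exactly as in the paper.
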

Finally, we now give a similar theorem in terms of Hopf algebras. 
According to \cite{Kon}, this discussion is regarded in the associative context. Let $\hat{T}$ be the ring $ \K[\![ X_1, \dots, X_n ]\!]$ of formal power series with non-commutative variables $X_1, \dots, X_n$, and $\hat{T}_k$ be the ideal of $ \hat{T}$ generated by formal power series of degree $\geq k$. Let $ \hat{T}_{\infty}$ be $\{0\}$. For $k \in \mathbb{N} \cup \{\infty \}$, let us consider the coproduct map $\hat{\Delta}: \hat{T}/ \hat{T}_k \ra \hat{T}/ \hat{T}_k \otimes\hat{T}/\hat{T}_k $ defined by $ \hat{\Delta} (X_i)= 1 \otimes X_i +X_i\otimes 1$, where $\otimes$ with $k= \infty$ is defined to be the complete tensor product as in \cite[Appendix A.1]{Q}. Take the following two automorphism groups: 
\begin{eqnarray}
\Aut_{\rm alg} ( \hat{T}/\hat{T}_k ) &:=& \{ \K \textrm{-algebra \ automorphism} \ U :\hat{T}/\hat{T}_k  \ra \hat{T}/\hat{T}_k    \mid \ U(\hat{T}_{p}) \subset \hat{T}_{p} \mathrm{ \ for \ any \ } p \in \N \}, \notag \\
\Aut_{\rm Hopf} ( \hat{T}/\hat{T}_k )&:=& \{ U \in \Aut_{\rm alg} ( \hat{T}/\hat{T}_k ) \mid U \mathrm{ \ preserves \ } \hat{\Delta} \} .
\end{eqnarray}
As is well-known, the set of primitive elements of $\hat{T}/\hat{T}_k $,
$$\mathcal{P}( \hat{T}/\hat{T}_k) :=\{ x \in \hat{T}/\hat{T}_k \mid
\hat{\Delta} (x)= 1 \otimes x +x\otimes 1 \} ,$$
has a Lie algebra structure with $[X,Y]=X Y -Y X$ and is isomorphic to $ \mathcal{L}/\mathcal{L}_k$. According to \cite[Theorem A.3.3]{Q}, the restrictions of Hopf automorphisms of $\hat{T}/\hat{T}_k $ give rise to a group isomorphism $ \Aut_{\rm Hopf} ( \hat{T}/\hat{T}_k ) \cong \Aut_{\rm Lie} ( \mathcal{L}/\mathcal{L}_k ) $. Thus, $ \Aut_{\rm Lie} ( \mathcal{L}/\mathcal{L}_k ) $ is regarded as a subgroup of $\Aut_{\rm alg} ( \hat{T}/\hat{T}_k )  $. 
\begin{thm}\label{thm51}
Let $\mathrm{Der} ( \hat{T}/\hat{T}_k \otimes \C)$ be the set of all derivations of $\hat{T}/\hat{T}_k \otimes \C$, and 
$p_k : \Aut_{\rm alg} ( \hat{T}/\hat{T}_k \otimes \C)  \ra  \Aut_{\rm alg} ( \hat{T}/\hat{T}_2 \otimes \C)= \GL_n(\C) $ be the projection. 
For $ \Phi \in \Aut_{\rm alg} ( \hat{T}/\hat{T}_k ) $, we choose a base matrix, $\phi$, of $p_k(\Phi)$. If $p_k(\Phi ) \in \GL_n(\C) $ is exponential solvable, then there is a unique derivation $D_\Phi \in \mathrm{Der} ( \hat{T}/\hat{T}_k \otimes \C)$ such that $\exp(D_\Phi )=\Phi $ and $ p_k(D_\Phi )= \phi$.
\end{thm}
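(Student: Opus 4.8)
The plan is to run the argument for Theorem~\ref{thm1} verbatim in the associative setting, the point being that the only structural inputs are (i) the grading of $\hat{T}/\hat{T}_k \otimes \C$ by word length, (ii) the nilpotent filtration it induces on the degree-raising derivations, and (iii) the semisimplicity of the degree-preserving part; all three hold for the tensor algebra exactly as for the free Lie algebra. Concretely, I would first fix the base matrix $\phi$ with $\exp(\phi)=p_k(\Phi)$ and form the \emph{diagonal} derivation $D_\phi \in \mathrm{Der}(\hat{T}/\hat{T}_k \otimes \C)$ determined by $D_\phi(X_i)=\sum_j \phi_{ji}X_j$ and the Leibniz rule; by construction $p_k(D_\phi)=\phi$, and $\Phi_0:=\exp(D_\phi)$ is the algebra automorphism induced by the linear substitution with matrix $p_k(\Phi)$, so $p_k(\Phi_0)=p_k(\Phi)$.

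Next I would isolate the nilpotent part. Let $\mathrm{Der}^+:=\ker\bigl(p_k\colon \mathrm{Der}(\hat{T}/\hat{T}_k\otimes\C)\to \mathrm{End}(\C^n)\bigr)$ be the derivations that strictly raise word length; since such a derivation is nilpotent modulo $\hat{T}_k$, this is a finite-dimensional nilpotent Lie algebra, and because the linear part of a bracket is the bracket of the linear parts, $\mathrm{Der}^+$ is an ideal of $\mathrm{Der}$. Hence $\mathfrak{h}:=\R D_\phi \oplus \mathrm{Der}^+$ is a real solvable Lie subalgebra with nilpotent ideal $\mathrm{Der}^+$ of codimension one; let $H$ be its analytic subgroup in $\Aut_{\rm alg}(\hat{T}/\hat{T}_k\otimes\C)$, which is connected and simply connected (a semidirect product of $\R$ with the simply connected unipotent group $\mathrm{IA}:=\exp(\mathrm{Der}^+)=\ker(p_k|_H)$). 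Note $\Phi_0=\exp(D_\phi)\in H$, and since $\Phi\Phi_0^{-1}$ has trivial linear part it lies in $\mathrm{IA}$; thus $\Phi=(\Phi\Phi_0^{-1})\Phi_0\in H$.

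The heart of the proof is that $H$ is an exponential solvable Lie group, i.e.\ $\exp\colon\mathfrak{h}\to H$ is a diffeomorphism. By the Dixmier--Saito criterion this holds iff no $\mathrm{ad}_X$, $X=cD_\phi+E\in\mathfrak{h}$, has a nonzero purely imaginary eigenvalue. Filtering $\mathrm{Der}^+$ by word length, $\mathrm{ad}_E$ is strictly degree-raising hence nilpotent, while $c\,\mathrm{ad}_{D_\phi}$ is semisimple with eigenvalues $c\nu$ and $c\bar\nu$, where $\nu=\mu_{i_1}+\cdots+\mu_{i_d}-\mu_i$ with $d\geq2$ and $\mu_1,\dots,\mu_n$ the eigenvalues of $\phi$ (the conjugates $\bar\nu$ arise from realifying the complex algebra, which is exactly why $\mathrm{Eig}$ is symmetrized under $\lambda\mapsto\bar\lambda$). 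Thus exponentiality requires $\mathrm{Re}\,\nu\neq0$ for every nonzero $\nu$. Now $\mathrm{Re}\,\nu=\ln|\lambda_{i_1}\cdots\lambda_{i_d}\,\lambda_i^{-1}|$, so $\mathrm{Re}\,\nu=0$ forces $\lambda_{i_1}\cdots\lambda_{i_d}\lambda_i^{-1}\in\mathrm{Eig}(p_k(\Phi))\cap\{z : |z|=1\}=\{1\}$ by exponential solvability, i.e.\ $\nu\in2\pi\sqrt{-1}\,\Z$. Excluding even the residual possibility $\nu\in2\pi\sqrt{-1}\,\Z\setminus\{0\}$ by a suitable choice of base matrix $\phi$ is the delicate nonresonance I expect to be the main obstacle; it is the content of Lemma~\ref{thm10}, whose failure for roots of unity is precisely the point of Remark~\ref{rrhm1}.

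Finally, since $\exp_H$ is a diffeomorphism, $\Phi\in H$ has a unique preimage $D_\Phi\in\mathfrak{h}$ under $\exp$. Writing $D_\Phi=cD_\phi+E$ with $E\in\mathrm{Der}^+$ and applying $p_k$, which commutes with $\exp$, yields $\exp(c\phi)=p_k(\Phi)=\exp(\phi)$. Because $H$ is simply connected, the quotient $H/\mathrm{IA}$ is a simply connected abelian group on which $\exp$ is a global isomorphism and on which $p_k$ descends to an injection; comparing the classes of $\exp(c\phi)$ and $\exp(\phi)=\Phi_0$ then forces $c\phi=\phi$, whence $p_k(D_\Phi)=\phi$. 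For uniqueness among \emph{all} derivations, I would note that any $D$ with $p_k(D)=\phi$ already lies in the affine subspace $D_\phi+\mathrm{Der}^+\subset\mathfrak{h}$, so if moreover $\exp(D)=\Phi$ then injectivity of $\exp_H$ gives $D=D_\Phi$. This produces the unique derivation asserted in the theorem.
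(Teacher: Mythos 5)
Your architecture is exactly the paper's: the paper proves Theorem~\ref{thm51} by rerunning the proof of Theorem~\ref{thm1} with $\Aut_{\rm Hopf}$ replaced by $\Aut_{\rm alg}$, i.e.\ the decomposition \eqref{777} of $\Phi$ into $(\mathrm{I}\Phi, p_k(\Phi))$, the one-parameter subgroup $K=\{\exp(t\phi)\}$, the semidirect product $\mathrm{IA}_{\rm alg}(\hat{T}/\hat{T}_k)\rtimes K$, exponentiality of that group, and uniqueness from bijectivity of $\exp$ (your observation that any candidate $D$ with $p_k(D)=\phi$ already lies in $D_\phi+\mathrm{Der}^+\subset\mathfrak{h}$ is in fact more explicit than the paper's one-line appeal). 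The divergence is in how exponentiality is certified: you use the Dixmier-type criterion on $\mathrm{ad}$-eigenvalues, whereas the paper passes to the time-one map $\tau(1)=\mathrm{Ad}(\mathfrak{s}(\exp\phi))$, notes via \eqref{pp9} that its eigenvalues lie in $\mathrm{Eig}(p_k(\Phi))$ and hence avoid $\{b\neq 1,\ |b|=1\}$, and invokes Remark~\ref{rem10} to conclude the hypothesis of Lemma~\ref{thm10}.

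That divergence is precisely where your proposal has a genuine gap, and you flag it yourself: the resonant case $\nu=\mu_{i_1}+\cdots+\mu_{i_d}-\mu_i\in 2\pi\sqrt{-1}\,\Z\setminus\{0\}$ is left open, and your claim that its exclusion ``is the content of Lemma~\ref{thm10}'' is wrong --- Lemma~\ref{thm10} is only the exponentiality \emph{criterion} ($T(x)=\{0\}$ or $\R$) and contains no non-resonance statement; exponential solvability of $p_k(\Phi)$ controls only $e^\nu$, never $\nu$ itself. Moreover the case genuinely occurs under the stated hypotheses, and in the associative setting of Theorem~\ref{thm51} it kills uniqueness. Take $n=2$, $k=3$, $p_k(\Phi)=\mathrm{diag}(-2,4)$: this is exponential solvable ($\mathrm{Eig}=\langle -2\rangle$ meets the unit circle only at $1$), yet the \emph{principal} base matrix $\phi=\mathrm{diag}(\ln 2+\pi\sqrt{-1},\,\ln 4)$ carries the resonant weight $2\mu_1-\mu_2=2\pi\sqrt{-1}$ on the derivation $D_f$ with $f(X_1)=0$, $f(X_2)=X_1^2$. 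On the invariant span of $X_2$ and $X_1^2$, the derivation $D_\phi+D_f$ is diagonalizable with eigenvalues $\ln 4$ and $\ln 4+2\pi\sqrt{-1}$, both of which exponentiate to $4$; hence $\exp(D_\phi+D_f)=\exp(D_\phi)=\mathfrak{s}(\mathrm{diag}(-2,4))$ while $p_k(D_\phi+D_f)=\phi$ and $D_f\neq 0$. So $\exp$ is not injective on your $\mathfrak{h}$ (the subalgebra $\R D_\phi\ltimes\mathrm{span}_{\R}\{D_f,\sqrt{-1}D_f\}$ is a copy of the non-exponential motion-group algebra), and the uniqueness assertion fails for this base matrix. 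In other words, what you deferred as ``the main obstacle'' is not a technicality that a cleverer reading of Lemma~\ref{thm10} closes: it requires an additional non-resonance hypothesis on $\phi$ (no weight in $2\pi\sqrt{-1}\,\Z\setminus\{0\}$). Note also that the paper's own bridge, Remark~\ref{rem10}, slips at exactly this point --- $\tau(1)$ may have eigenvalue $1$ while $d\tau$ has eigenvalue $2\pi\sqrt{-1}m\neq 0$, in which case $T(x)$ is a lattice --- so your suspicion identified a real weak spot of the argument rather than a step you merely failed to reconstruct; in the Lie-algebra setting of Theorem~\ref{thm1} this particular vector dies ($[X_1,X_1]=0$), but analogous resonances (e.g.\ $p_k(\Phi)=\mathrm{diag}(-2,-1/2)$ with $f(X_1)=[X_1,[X_1,X_2]]$ for $k\geq 4$) show the issue persists there too.
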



The above theorems can be summarized as a commutative diagram,
\begin{equation}\label{seq3}
{ \xymatrix{
\ln: \{ \Phi \in \Aut_{\rm alg} ( \hat{T}/\hat{T}_k \otimes \C ) \mid p_k(\Phi) \textrm{ is exponential solvable} \} \ar[r]\ar@{}[d]|{\bigcup}& \mathrm{Der} ( \hat{T}/\hat{T}_k \otimes \C)\ar@{}[d]|{\bigcup}\\
\ln: \{ \Phi \in \mathrm{Aut}_{\rm Lie}( \mathcal{L}/\mathcal{L} _k \otimes \C ) \mid p_k(\Phi) \textrm{ is exponential solvable} \} \ar[r]\ar@{}[d]|{\bigcup}& \mathrm{Der} ( \mathcal{L}/\mathcal{L} _k \otimes \C)\ar@{}[d]|{\bigcup}\\
\ln: \{ \Phi \in \mathrm{Aut}_{\rm 0}( \mathcal{L}/\mathcal{L} _k \otimes \C ) \mid p_k(\Phi) \textrm{ is exponential solvable} \} \ar[r]\ar@{}[d]|{\bigcup}& \mathrm{Der}_{\omega} ( \mathcal{L}/\mathcal{L} _k \otimes \C)\ar@{=}[d] \\
\mathcal{I}_{g,1} \cup \{ \mathrm{Dehn \ twists}\} \ar[r]^{\mathrm{Log}} & \mathrm{Der}_{\omega} ( \mathcal{L}/\mathcal{L} _k \otimes \C). \\
}}\end{equation}

\section{Computations of the natural logarithm $ \mathrm{ln}(\Phi) $}
\label{99414}
This section shows a procedure for computing the natural logarithm $ \mathrm{ln}(\Phi) $ from the viewpoints of generalized Magnus expansions and rational homotopy theory \cite{Q}.
Recall the isomorphism $ \Aut_{\rm Hopf} ( \hat{T}/\hat{T}_k ) \cong \Aut_{\rm Lie} ( \mathcal{L}/\mathcal{L}_k ) $ from the previous section. Thus, to investigate the above groups $\Aut_{\rm Lie} ( \mathcal{L}/\mathcal{L}_k ) $ and $ \Aut_{\omega} ( \mathcal{L}/\mathcal{L}_k )$, we may focus on $\Aut_{\rm Hopf} ( \hat{T}/\hat{T}_k ) $ hereafter. 

As is done in \cite[Section 1]{Ka1}, we first observe the decompositions \eqref{777} below. Recall the lemma:
\begin{lem}
[{\cite[Lemma 1.2]{Ka1}}]\label{thm130} Take $k \in \mathbb{N} \cup \{\infty \}$. A $\K$-algebra endomorphism $U$ of $\hat{T}/\hat{T}_k $ lies in $\Aut_{\rm alg} ( \hat{T}/\hat{T}_k ) $ if and only if $U(\hat{T}_{p}) \subset \hat{T}_{p}$ for each $p \in \N$ and the induced map of $U$ on $\hat{T}_1/\hat{T}_2 = H_{\K}$ is an isomorphism.
\end{lem}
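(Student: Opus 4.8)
The plan is to prove the two implications separately, with essentially all of the content in the converse. Throughout, the organizing idea is that condition (1) makes $U$ filtration-preserving, hence allows passage to the associated graded algebra, where the single hypothesis (2) on $H_\K$ can be propagated to all degrees.

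For the forward implication, suppose $U \in \Aut_{\rm alg}(\hat{T}/\hat{T}_k)$. Then $U(\hat{T}_p)\subset \hat{T}_p$ for all $p$ is part of the definition, so only the statement about $H_\K$ needs an argument. Here I would use that $\hat{T}_1$ is intrinsically the set of non-units of the ring $\hat{T}/\hat{T}_k$ (local Artinian for finite $k$, complete local for $k=\infty$), and that $\hat{T}_p=(\hat{T}_1)^p$. Consequently every $\K$-algebra automorphism, in particular $U^{-1}$, automatically preserves $\hat{T}_1$ and hence $\hat{T}_2=(\hat{T}_1)^2$. Thus $U^{-1}$ induces a map on $H_\K=\hat{T}_1/\hat{T}_2$ which is a two-sided inverse of the map induced by $U$, so the latter is an isomorphism.

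For the converse, assume (1) and (2). By (1), $U$ descends to the graded quotients and yields $\gr_p(U):\hat{T}_p/\hat{T}_{p+1}\to \hat{T}_p/\hat{T}_{p+1}$. The key observation is that $\gr(U)$ is a graded $\K$-algebra endomorphism of the associated graded algebra $\gr(\hat{T}/\hat{T}_k)$, which is the truncated tensor algebra on $H_\K$ with $\hat{T}_p/\hat{T}_{p+1}\cong H_\K^{\otimes p}$. Since a graded algebra endomorphism of a tensor algebra is determined by its action on the degree-one generators, one obtains $\gr_p(U)=(\bar{U}_1)^{\otimes p}$, where $\bar{U}_1$ is the map induced on $H_\K$. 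Assumption (2), that $\bar{U}_1$ is an isomorphism, then forces $\gr_p(U)$ to be an isomorphism for every $p$. It remains to upgrade this to invertibility of $U$ itself. When $k$ is finite, $\hat{T}/\hat{T}_k$ is finite-dimensional with a finite filtration: injectivity follows by inspecting the lowest-degree nonzero component of a putative kernel element, which would be annihilated by some isomorphism $\gr_p(U)$, and surjectivity follows by a dimension count. The set-theoretic inverse of a bijective algebra endomorphism is automatically an algebra homomorphism, and it preserves the filtration since $\gr(U)$ is invertible, so $U \in \Aut_{\rm alg}$. When $k=\infty$, I would reduce to the finite case: by (1), $U$ induces compatible endomorphisms $U_m$ on each $\hat{T}/\hat{T}_m$, each an automorphism by the finite case, and the $U_m^{-1}$ are compatible with the projections by uniqueness of inverses, so $U^{-1}:=\lim_{\infty\leftarrow m}U_m^{-1}$ is a well-defined continuous, filtration-preserving inverse; hence $U \in \Aut_{\rm alg}(\hat{T}/\hat{T}_\infty)$.

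The main obstacle, modest as it is, lies in this final step in the completed setting $k=\infty$: invertibility of $U$ on every graded piece does not by itself furnish an inverse, and one must invoke completeness (equivalently, pass to the inverse limit of the finite quotients) to assemble a genuine filtration-preserving inverse. The algebraic crux that makes everything go through is the identification $\gr_p(U)=(\bar{U}_1)^{\otimes p}$, which is exactly what transports the single hypothesis on $H_\K$ to all degrees.
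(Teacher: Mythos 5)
Your proof is correct and is essentially the argument behind the cited result: the paper itself gives no proof of this lemma, deferring to \cite[Lemma 1.2]{Ka1}, whose proof likewise rests on the identification $\gr_p(U) = (\bar{U}_1)^{\otimes p}$ on the graded quotients $\hat{T}_p/\hat{T}_{p+1} \cong H_{\K}^{\otimes p}$, followed by invertibility on the finite truncations and passage to the inverse limit for $k=\infty$. Your forward-direction observation that $\hat{T}_1$ is intrinsically the set of non-units (so that $U^{-1}$ automatically preserves the filtration and $\bar{U}_1$ has a two-sided inverse) is a correct, slightly more careful treatment of a point the definition of $\Aut_{\rm alg}$ leaves implicit.
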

\noindent
Let $H_{\K}$ be $\hat{T}_1/\hat{T}_2 \cong \K^n$ as a vector space. For $U \in \Aut_{\rm alg} ( \hat{T}/\hat{T}_k ) $, we denote the projection of $U$ on $\hat{T}_1/\hat{T}_2 $ by $p_k(U)\in \GL_n(\K)$ and have the following group homomorphisms,
$$ p_k : \Aut_{\rm alg} ( \hat{T}/\hat{T}_k ) \ra \GL_n(\K), \ \ \ \ \ \Aut_{\rm Hopf} ( \hat{T}/\hat{T}_k ) \ra \GL_n(\K). $$
We denote the kernels by $\mathrm{IA}_{\rm alg} ( \hat{T}/\hat{T}_k ) $ and $\mathrm{IA}_{\rm Hopf} ( \hat{T}/\hat{T}_k ) $. We will show splittings of the homomorphisms. For $ A \in \GL_n(\K)$ and $j \in \N$, consider the tensor representation, i.e., $A(z_j):= A^{\otimes j} z_j$ for $z_j \in H_{\K}^{\otimes j}$. Identifying $\hat{T}/\hat{T}_k $ with $\prod_{j=0}^{k-1} H_{\K}^{\otimes j}$, the tensor representation yields a homomorphism $\mathfrak{s}: \GL_n(\K) \ra GL( \hat{T}/\hat{T}_k )$. We can easily verify that $\mathfrak{s}$ lies in $ \Aut_{\rm alg} ( \hat{T}/\hat{T}_k ) $ and preserves $\hat{\Delta}$. Namely, $\mathfrak{s}$ is a splitting of $p_k$. Hence, we have the semi-direct products,
\begin{equation}\label{777} \Aut_{\rm alg} ( \hat{T}/\hat{T}_k ) \cong \mathrm{IA}_{\rm alg} ( \hat{T}/\hat{T}_k ) \rtimes GL(H_\K), \ \ \ \ \ \Aut_{\rm Hopf} ( \hat{T}/\hat{T}_k ) \cong \mathrm{IA}_{\rm Hopf} ( \hat{T}/\hat{T}_k ) \rtimes GL(H_\K). 
\end{equation}
To gain an understanding of the subgroup $\mathrm{IA}_{\rm alg} ( \hat{T}/\hat{T}_k ) $, let us examine the bijection \eqref{7577} below. Every $\K$-linear homomorphism $\alpha: H_{\K} \ra \hat{T}_2/\hat{T}_k$ can be uniquely extended to an algebra homomorphism $\tilde{\alpha}: \hat{T}/\hat{T}_k\ra \hat{T}/\hat{T}_k$ such that the induced map on $\hat{T}/\hat{T}_2$ is $\mathrm{id}_{\hat{T}/\hat{T}_2}$.
Thus, $\tilde{\alpha}  $ an isomorphism, by Lemma \ref{thm130}. To summarize, we have a bijection\footnote{The bijection is shown in \cite[(1.7)]{Ka1}, and is equal to the inverse of $E$ in \cite{Ka1}.} 
\begin{equation}\label{7577} E: \Hom_{\K}( H_{\K}, \hat{T}_2/\hat{T}_k ) \stackrel{\sim}{\lra }\mathrm{IA}_{\rm alg} ( \hat{T}/\hat{T}_k ) ; \ \ \alpha \longmapsto  \tilde{\alpha} .\end{equation}
Moreover, the preimage of $\mathrm{IA}_{\rm Hopf} ( \hat{T}/\hat{T}_k ) $ implies
\begin{equation}\label{75778} E^{-1}( \mathrm{IA}_{\rm Hopf} ( \hat{T}/\hat{T}_k ) )= \{ f \in \Hom_{\K}( H_{\K}, \hat{T}_2/\hat{T}_k ) \mid f (x) \in \mathcal{P}( \hat{T}/\hat{T}_k) \ \mathrm{for \ any \ }x \in H_\K \}.
\end{equation}

From viewpoints of \eqref{777} and \eqref{7577}, the group structure of $\Aut_{\rm alg} ( \hat{T}/\hat{T}_k ) $ can be described as follows (The description for $k \leq 4$ appears in \cite[Lemma 1.4]{Ka1}). Let $P(m) $ be the set of partitions of $m$. For $i_1, \dots, i_\ell \in \mathbb{Z}_{\geq 1} $ with $\sum_{j=1}^\ell i_j =m$, we denote the associated partition by $[i_1| i_2 | \cdots | i_\ell] \in P(m)$. In addition, for $u \in \Hom_{\K}( H_{\K}, \hat{T}_2/\hat{T}_k ) $ and $A\in  \GL(H_{\K})$, we define $((u,A))$ to be $E(u) \circ A \in \Aut_{\rm alg} ( \hat{T}/\hat{T}_k ) $, where $A$ may lie in $ \Aut_{\rm alg} ( \hat{T}/\hat{T}_k )$ by \eqref{777}. We define $u_m $ to be the element in $ \Hom_{\K}( H_{\K}, \hat{T}_m/\hat{T}_{m+1} ) $ that is determined by 
$$ ((u,A)) a = Aa + \sum_{m : 1 \leq m < k} u_m(Aa) \in \hat{T}/\hat{T}_k $$
for any $a \in H_{\K}.$ By a straightforward computation, we can show the following:
\begin{lem}
[{cf. \cite[Lemma 1.4]{Ka1}}]\label{yyyy} Suppose $ ((w,C))=((u,A))((v,B)) \in \Aut_{\rm alg} ( \hat{T}/\hat{T}_k ) $ for $(u,A)$, $(v,B),$ and $(w,C) \in \mathrm{IA}_{\rm alg} ( \hat{T}/\hat{T}_k ) \rtimes GL(H_\K) $. Then, $C=AB$ and
$$ w_m (a)= u_m (a) + \sum_{ [i_1| i_2 | \cdots | i_\ell] \in P(m) } (u_{i_1} \otimes \cdots \otimes u_{i_\ell}) A^{\otimes \ell} \bigl( v_\ell (A^{-1}a ) \bigr) $$
hold for any $a\in H_{\K}$ and $m \geq 2$. Here, $u_1=v_1=0$.
\end{lem}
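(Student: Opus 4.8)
The plan is to expand the composite $((u,A))((v,B))$ grade by grade, exploiting that each factor is a $\K$-algebra automorphism, so that its effect on a homogeneous tensor is governed by multiplicativity. First I would dispose of $C=AB$: the projection $p_k$ is a group homomorphism and, by construction of the splitting $\mathfrak{s}$, it sends $((u,A))\mapsto A$ and $((v,B))\mapsto B$; applying $p_k$ to $((w,C))=((u,A))((v,B))$ gives $C=AB$.

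For the main identity, I would first record the inner factor in graded form. By the defining relation for the components (with the convention $v_1=\mathrm{id}$), for $a\in H_\K$ one has
\[ ((v,B))a=\sum_{\ell\ge 1} v_\ell(Ba),\qquad v_\ell(Ba)\in H_\K^{\otimes \ell}. \]
Next I would apply $((u,A))=\tilde u\circ\mathfrak{s}(A)$ to each homogeneous piece. Since $\mathfrak{s}(A)$ acts on $H_\K^{\otimes\ell}$ as $A^{\otimes\ell}$ and $\tilde u$ is an algebra homomorphism with $\tilde u(b)=\sum_{m\ge 1}u_m(b)$ on $H_\K$, multiplicativity gives, for $z\in H_\K^{\otimes\ell}$,
\[ ((u,A))(z)=\tilde u\bigl(A^{\otimes\ell}z\bigr)=\sum_{m_1,\dots,m_\ell\ge 1}(u_{m_1}\otimes\cdots\otimes u_{m_\ell})\bigl(A^{\otimes\ell}z\bigr). \]
Taking $z=v_\ell(Ba)$, summing over $\ell$, and collecting the degree-$m$ part, the degree-$m$ component of $((w,C))a$ becomes $\sum_{\ell}\sum_{m_1+\cdots+m_\ell=m}(u_{m_1}\otimes\cdots\otimes u_{m_\ell})(A^{\otimes\ell}v_\ell(Ba))$.

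Finally I would translate this into the stated normalization. By definition $((w,C))a=\sum_m w_m(Ca)$, so the displayed degree-$m$ component equals $w_m(Ca)$; substituting $a\mapsto C^{-1}a$ and using $C=AB$, hence $BC^{-1}=A^{-1}$, replaces $v_\ell(Ba)$ by $v_\ell(A^{-1}a)$ and reindexes the inner sum over compositions $[i_1|\cdots|i_\ell]\in\mathcal{P}_m$. The composition $[m]$ (the case $\ell=1$) contributes $u_m(A\,v_1(A^{-1}a))=u_m(a)$ for the genuine value $v_1=\mathrm{id}$; pulling this term out and then setting $v_1=0$ inside the remaining sum — so that it is not counted twice — yields exactly the asserted formula.

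The hard part will be the grading bookkeeping in the middle step: tracking how multiplicativity of $\tilde u$ distributes $A^{\otimes\ell}$ across the $\ell$ tensor slots, and keeping straight the two normalizations hidden in the notation (the components $w_m$ defined through $w_m(Ca)$ versus the raw degree-$m$ part of $((w,C))a$), which is what forces the substitution $a\mapsto C^{-1}a$. Once these are set up, the identification of the resulting composition sum with $\mathcal{P}_m$ and the role of the conventions $u_1=\mathrm{id}$, $v_1=0$ are routine.
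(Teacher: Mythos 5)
Your proposal is correct and is exactly the ``straightforward computation'' the paper invokes (it omits the proof, deferring to \cite[Lemma 1.4]{Ka1}): expand $((v,B))a$ in graded components, push through $((u,A))=\tilde u\circ\mathfrak{s}(A)$ using multiplicativity, collect degree-$m$ terms, and substitute $a\mapsto C^{-1}a$ with $C=AB$. You also correctly handle the one genuine subtlety, namely that the degree-one component of the inner factor is the identity while the lemma's formula uses the convention $v_1=0$ to avoid double-counting the $\ell=1$ composition, and you rightly read $\mathcal{P}_m$ as ordered compositions, consistent with Example \ref{yyyyy}.
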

\begin{exa}\label{yyyyy} 
By abusing the notation, we will denote the operation $ a \mapsto A^{\otimes \ell} (v_{\ell}(A^{-1}a))$ by $A v_{\ell}$ and give concrete descriptions of $w_m$ for $m \leq 4$:
\[w_2 = u_2 +Av_2, \ \ \ w_3= u_3 +(u_2 \otimes 1 + 1 \otimes u_2) Av_2+ v_3, \]
\[w_4= u_4 +(u_3 \otimes 1 + 1 \otimes u_3+ u_2 \otimes u_2) Av_2+(u_2 \otimes 1 \otimes 1+ 1 \otimes u_2\otimes 1+ 1\otimes 1 \otimes u_2) Av_3+A v_4.\]
\end{exa}

Finally, for $\K=\C$, we can show a procedure for computing the logarithm $\ln(\Phi) $ for $\Phi \in \Aut_{\rm Hopf} ( \hat{T}/\hat{T}_k )\cong \Aut_{\rm Lie} ( \mathcal{L}/\mathcal{L}_k \otimes \C) $ such that $p_k(\Phi) $ is exponential solvable. Following the semi-direct products \eqref{777}, we decompose $\Phi$ as $(\mathrm{I}\Phi , p_k(\Phi)) \in \mathrm{IA}_{\rm Hopf} ( \hat{T}/\hat{T}_k ) \rtimes \GL(H_\C) $. As mentioned in Sections \ref{333} and \ref{99se33}, there uniquely exists $X\in \mathrm{Der}(\mathcal{L}/\mathcal{L}_k \otimes \C) $ satisfying $\exp(X)= (\mathrm{I}\Phi,1) $. Letting $Y \in \mathfrak{gl}_n(\C)$ be $ \ln(p_k(\Phi))$, we formally obtain the following from the BCH formula:
\[\ln ( \Phi)= \ln ( ( \mathrm{I}\Phi ,1)(1, p_k(\Phi)))= \log ( \exp(X)  \exp (Y ))=\]
\begin{equation}\label{xyx}= {\displaystyle X+Y+{\frac {1}{2}}[X,Y]+{\frac {1}{12}}[X,[X,Y]]-{\frac {1}{12}}[Y,[X,Y]]+\cdots } .
\end{equation}
\begin{lem}
[{See \S \ref{99se33} for the proof}]\label{yyy} The BCH-formula \eqref{xyx} converges in $ \mathrm{Der}(\mathcal{L}/\mathcal{L}_k \otimes \C ) $.
\end{lem}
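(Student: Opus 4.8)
The plan is to exploit the degree grading of the target Lie algebra $\mathfrak{h} := \mathrm{Der}(\mathcal{L}/\mathcal{L}_k \otimes \C)$. Decompose $\mathfrak{h} = \mathfrak{h}_0 \oplus \mathfrak{h}_+$, where $\mathfrak{h}_0$ consists of the degree-preserving derivations and $\mathfrak{h}_+ = \bigoplus_{r \geq 1} \mathfrak{h}_r$ of those strictly raising the degree by $r$. The matrix part $X = \ln(p_k(\Phi))$ sits in $\mathfrak{h}_0$, acting on the grading through the tensor representation $\mathfrak{s}$, whereas the IA-part $Y$ (defined by $\exp(Y) = (\mathrm{I}\Phi,1)$) lies in $\mathfrak{h}_+$. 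Since $\mathcal{L}/\mathcal{L}_k$ is nilpotent of class $k-1$, every derivation raising degree by $\geq k-1$ vanishes on $\mathfrak{g} = \mathcal{L}/\mathcal{L}_k \otimes \C$; hence $\mathfrak{h}_r = 0$ for $r \geq k-1$, the subspace $\mathfrak{h}_+$ is a nilpotent ideal of $\mathfrak{h}$, and in particular $\mathrm{ad}_Y$ is nilpotent of index $\leq k-1$. This grading is the structural input that controls the otherwise infinite series \eqref{xyx}.

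First I would group the iterated brackets of \eqref{xyx} by the number $q$ of occurrences of $Y$. A bracket containing $q$ factors of $Y$ raises the degree by at least $q$, so it lands in $\bigoplus_{r \geq q}\mathfrak{h}_r$ and vanishes whenever $q \geq k-1$. Thus \eqref{xyx} reduces to a finite sum $\sum_{q=0}^{k-2} S_q$ in which $S_0 = X$ and $S_q$ collects all brackets with exactly $q$ copies of $Y$ and arbitrarily many copies of $X$; it then suffices to prove that each $S_q$ converges in the finite-dimensional space $\mathfrak{h}$.

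The next step is to resum each $S_q$ in the variable $\mathrm{ad}_X$. For $q = 1$ this is the classical linear-in-$Y$ part of the Baker--Campbell--Hausdorff expansion, namely $S_1 = \tfrac{\mathrm{ad}_X}{1-e^{-\mathrm{ad}_X}}(Y)$, and for general $q$ the coefficient is built from the same generating function $z/(1-e^{-z})$ and its derivatives. These functions are holomorphic away from the singular set $2\pi i (\Z \setminus \{0\})$ on the imaginary axis. Here I would invoke the hypothesis through the framework of exponential solvable Lie groups developed in Sections \ref{333}--\ref{99se33}: exponential solvability of $p_k(\Phi)$, i.e.\ $\mathrm{Eig}(p_k(\Phi)) \cap \{z \in \C \mid |z| = 1\} = \{1\}$, is exactly the condition ensuring that $\mathrm{ad}_X$ has no nonzero purely imaginary eigenvalue on the relevant solvable subalgebra generated by $X$ and $Y$, so that $\exp$ is a diffeomorphism there. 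Consequently the spectrum of $\mathrm{ad}_X|_{\mathfrak{h}_+}$ avoids the poles of these generating functions, each $S_q$ is defined by the holomorphic functional calculus on the generalized eigenspace decomposition of $\mathrm{ad}_X$ (Jordan blocks, arising when $p_k(\Phi)$ is not semisimple, being absorbed by the derivatives of $z/(1-e^{-z})$), and the partial sums converge. Reassembling, $\sum_{q=0}^{k-2} S_q$ converges to $\ln(\Phi) = D_\Phi$; passing to the inverse limit over $k$ the convergence is degreewise, which yields the case $k = \infty$.

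The main obstacle is precisely the convergence of the sub-series $S_q$ for $q \geq 1$: because $\mathrm{ad}_X$ is not nilpotent, these contain unboundedly many factors of $X$, so convergence cannot follow from nilpotency alone as it did for the $Y$-count. The delicate point is therefore to locate the spectrum of $\mathrm{ad}_X|_{\mathfrak{h}_+}$ relative to $2\pi i(\Z \setminus \{0\})$, and to verify that exponential solvability keeps it off this imaginary singular locus even for the composite eigenvalues (integral combinations of the $\ln \lambda_i$) that occur on the higher graded pieces $\mathfrak{h}_r$, rather than merely for the differences $\ln\lambda_i - \ln\lambda_j$ visible on $\mathfrak{h}_0$. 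Establishing this spectral separation on the relevant solvable subalgebra is the technical heart to be supplied in Section \ref{99se33}.
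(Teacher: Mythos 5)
Your plan is genuinely different from the paper's proof, which contains no spectral analysis at all: the paper simply places $X$ and $Y$ in the Lie algebra of the semidirect product $\mathrm{IA}_{\rm Hopf}(\hat{T}/\hat{T}_k)\rtimes K$ already shown to be exponential solvable in the proof of Theorem~\ref{thm1}, and then quotes Rouvi\`ere's theorem that the BCH series converges in any exponential solvable Lie group. Your substitute for that citation fails precisely at the step you defer as ``the technical heart'': the spectral separation is false. Exponential solvability of $p_k(\Phi)$ does \emph{not} keep the spectrum of $\mathrm{ad}_X|_{\mathfrak{h}_+}$ off $2\pi\sqrt{-1}\,(\Z\setminus\{0\})$ when $X=\ln (p_k(\Phi))$ is taken with the principal branch of Example~\ref{b3b7}. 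Take $n=3$, $k\geq 3$, $p_k(\Phi)=\mathrm{diag}(-2,-2,4)$: here $\mathrm{Eig}(p_k(\Phi))$ is generated by $-2$ and $4$, an element $(-1)^a2^{\,a+2b}$ has modulus one only if $a=-2b$, in which case it equals $1$, so $p_k(\Phi)$ is exponential solvable; yet $\ln(-2)=\ln 2+\pi\sqrt{-1}$, and the degree-one derivation $Y$ with $Y(x_3)=[x_1,x_2]$, $Y(x_1)=Y(x_2)=0$ satisfies $[X,Y]=\bigl(2\ln(-2)-\ln 4\bigr)Y=2\pi\sqrt{-1}\,Y$, squarely on the singular set of $z/(1-e^{-z})$. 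The point is that $\mathrm{Eig}$ controls only the group-level weights $\lambda_{i_1}\cdots\lambda_{i_j}\lambda_{i_0}^{-1}$ (here $(-2)(-2)4^{-1}=1$, which is harmless) and is blind to the $2\pi\sqrt{-1}\,\Z$ ambiguity introduced in choosing logarithm branches; that ambiguity is exactly what the paper's route through the one-parameter subgroup sees, via the subgroups $T(x)$ in Lemma~\ref{thm10} and Remark~\ref{rem10}, and what a condition on eigenvalues of $p_k(\Phi)$ alone cannot see. For $\Phi=\exp(Y)\circ\mathfrak{s}(p_k(\Phi))$ all BCH terms with two or more $Y$'s vanish, and the surviving subseries $Y+\frac12[X,Y]+\sum_{n\geq1}\frac{b_{2n}}{(2n)!}\mathrm{ad}(X)^{2n}(Y)$ has terms of size $\frac{|b_{2n}|}{(2n)!}(2\pi)^{2n}\to 2$, so no refinement of your argument can supply the missing step for this $\Phi$.

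There is a second, independent gap: even when the spectrum does avoid the poles, ``defined by the holomorphic functional calculus'' does not yield that ``the partial sums converge.'' The functional calculus produces the operator $\mathrm{ad}_X/(1-e^{-\mathrm{ad}_X})$ whenever the spectrum misses $2\pi\sqrt{-1}(\Z\setminus\{0\})$, but $S_1$ is the \emph{Maclaurin series} of $z/(1-e^{-z})$ evaluated at $\mathrm{ad}_X$, and that series has radius of convergence $2\pi$; it diverges as soon as $\mathrm{ad}_X$ has an eigenvalue of modulus greater than $2\pi$ on the cyclic subspace of $Y$. For instance $p_k(\Phi)=\mathrm{diag}(e^{10},e^{-10})$ is exponential solvable with all weights real, so the paper's machinery applies without difficulty, yet for $Y(x_1)=[x_1,x_2]$ one gets $[X,Y]=-10\,Y$ and $\sum_n \frac{|b_{2n}|}{(2n)!}10^{2n}$ diverges. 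Analytic continuation proves the \emph{existence} of a logarithm --- which is Theorem~\ref{thm1}, already available --- not the convergence of the series \eqref{xyx}, which is what Lemma~\ref{yyy} asserts; note also that your reduction of \eqref{xyx} to $\sum_{q=0}^{k-2}S_q$ is a rearrangement, which itself presupposes a convergence you have not established (in the two examples above this is moot, since all multi-$Y$ terms vanish and the divergence is unambiguous). These examples show that termwise convergence of \eqref{xyx} is genuinely delicate under the stated hypotheses, and that the convergence must be taken in the sense furnished by the exponential solvable group structure: the paper's appeal to Rouvi\`ere applied to $\mathrm{IA}_{\rm Hopf}(\hat{T}/\hat{T}_k)\rtimes K$ is not a dispensable shortcut but the substance of the proof, and your resummation scheme, as formulated, neither recovers it nor can be repaired by locating the spectrum of $\mathrm{ad}_X$, since the needed spectral condition simply does not follow from exponential solvability of $p_k(\Phi)$.
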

To conclude, if we find concretely a $Y$ satisfying $\exp(Y)= (\mathrm{I}\Phi,1) $ and a decomposition $\Phi =(\mathrm{I}\Phi , p_k(\Phi)) $ with small $k \in \N$, we can sometimes compute $\ln ( \Phi) $ via the BCH formula; see \cite[Section 7]{KK} and \cite[Sections 5 and 8]{MT} for a description of computing such $Y$'s. For example, let us give a concrete computation for $k=3$, where we denote $[X, Y]$ by $\mathrm{ad}(X)(Y)$.
\begin{exa}\label{y1}
First, suppose that $k=3$. Then, any term in \eqref{xyx} which contains two instances of $Y$ vanishes. Thus, we can apply a reduction of the BCH formula (see \cite[Section 3.3]{Reu}) to $\mathrm{Im}(\Phi)$, as $X+ \mathrm{ad}(\frac{X}{1 - e^X})(Y) $; more precisely, if we define the Bernoulli numbers $b_{2n}$ by the Taylor expansion $x/(1-e^x)= 1+x/2 + \sum_{n=1}^{\infty} \frac{1}{(2n)!} b_{2n}x^{2n} $, the $\mathrm{ad}(\frac{X}{1 - e^X})(Y)$ is defined by $ Y+\frac{1}{2}[X, Y] +\sum_{n=1}^{\infty} \frac{1}{(2n)!} b_{2n} \mathrm{ad}(X)^{2n}(Y).$ For example, if $X$ is a diagonal matrix, it is not so hard to compute $\mathrm{ad}(\frac{X}{1 - e^X})(Y)$.
\end{exa}

Concerning the case $k \geq 4$, if a term in the BCH formula contains a bracket of $Y$ $(k-2)$ times, the term vanishes. Since we can find explicit descriptions of order $O( Y^k )$ in the BCH formula (see, e.g., \cite[Section 4]{ML} or \cite[Section 3.3]{Reu}), we can find an explicit formula for $ \ln ( \Phi)$, as in Example \ref{y1}.

\section{Motivation for studying the Johnson map and topological invariants}
\label{33773}
In this section, we explain our motivation behind defining an extension of the logarithm. Throughout this section, we will fix $n, k \in \mathbb{Z}_{\geq 2} \cup \{ \infty\} $, and the free group, $F$, with basis $x_1,x_2, \dots, x_n$,

Let us begin by introducing Magnus expansions. Take an invertible matrix $A = \{ a_{ij}\}_{1 \leq i,j \leq n} \in \GL_{n}(\K)$. For $m \geq 0$, we define $ (\hat{T}/\hat{T}_k)^{\times}$ to be the multiplicative group consisting of invertible elements of $ \hat{T}/\hat{T}_k$. Inspired by \cite{Ka1,KK}, we define {\it a Magnus expansion (over $A$)} to be a group homomorphism $\theta: F \ra (\hat{T}/\hat{T}_k)^{\times}$ satisfying $ \theta ( x_i)= 1+ \sum_{j=1}^n a_{ij} X_j$ modulo $ \hat{T}_2$ for any $i \leq n$. If $A$ is the identity matrix $I_n$, this definition is the same as the generalized Magnus expansion \cite{Ka1, KK}. A Magnus expansion $\theta$ is said to be {\it group-like} if $\hat{\Delta} (\theta (x)) =\theta (x) \otimes \theta (x)$ for any $x \in F$. For example, the Magnus expansion $\theta_{\exp} $ defined by $\theta_{\exp}(x_i)=1+ \sum_{j=1}^{\infty} X_i^j /j! $ is group-like. Furthermore, we define a set
$$\Theta_n^{\rm grp} :=  \{ \mathrm{Group} \textrm{-}\mathrm{like \ Magnus \ expansions \ }  \theta: F \ra (\hat{T}/\hat{T}_k)^{\times} \mathrm{\ over \ } A \mid A \in \GL_{n}(\K) \}. $$
For $ \theta \in \Theta_n^{\rm grp} $ and $ U \in \Aut_{\rm Hopf} ( \hat{T} /\hat{T}_k) $, we define $U \cdot \theta$ by $U \circ \theta $. From the definitions, $U \cdot \theta $ lies in $\Theta_n^{\rm grp}$. This correspondence defines an action of $\Aut_{\rm Hopf} ( \hat{T} /\hat{T}_k) $ on $ \Theta_n^{\rm grp}$. Similarly to \cite[Theorem 1.3 (2)]{Ka1}, we can prove the following:
\begin{prop}\label{prop7}
The action of $\Aut_{\rm Hopf} ( \hat{T}/\hat{T}_k ) $ on $ \Theta_n^{\rm grp}$ is free and transitive.
\end{prop}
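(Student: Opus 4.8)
The plan is to translate the statement into the Lie-algebra picture, where it becomes the assertion that $\Aut_{\rm Lie}(\mathcal{L}/\mathcal{L}_k)$ acts simply transitively on the set of ``free generating systems'' of $\mathcal{L}/\mathcal{L}_k$. Recall that $\exp$ and $\log$ give mutually inverse bijections between the group-like and the primitive elements of $\hat{T}/\hat{T}_k$ (see \cite[Appendix A]{Q}), and that $\mathcal{P}(\hat{T}/\hat{T}_k) \cong \mathcal{L}/\mathcal{L}_k$. Hence, given $\theta \in \Theta_n^{\rm grp}$ over $A = \{a_{ij}\}$, each $\theta(x_i)$ is group-like, so $\xi_i := \log \theta(x_i)$ lies in $\mathcal{L}/\mathcal{L}_k$ and has degree-one part $\sum_j a_{ij} X_j$. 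Since $F$ is free and $\exp \xi_i = \theta(x_i)$ is invertible, $\theta$ is recovered from the tuple $(\xi_1, \dots, \xi_n)$; conversely any tuple in $(\mathcal{L}/\mathcal{L}_k)^n$ whose degree-one parts form an invertible matrix produces such a $\theta$. This identifies $\Theta_n^{\rm grp}$ with the set $\mathcal{F}$ of such tuples, and, under $\Aut_{\rm Hopf}(\hat{T}/\hat{T}_k) \cong \Aut_{\rm Lie}(\mathcal{L}/\mathcal{L}_k)$ together with $U(\exp\xi) = \exp(U\xi)$ for the continuous algebra automorphism $U$, identifies the action $U \cdot \theta = U \circ \theta$ with $\Psi \cdot (\xi_i) = (\Psi(\xi_i))$. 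It therefore suffices to show that $\Aut_{\rm Lie}(\mathcal{L}/\mathcal{L}_k)$ acts freely and transitively on $\mathcal{F}$.

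The crux is the Lie-algebra analogue of Lemma \ref{thm130}: for a tuple $(\xi_i) \in \mathcal{F}$, the assignment $X_i \mapsto \xi_i$ extends to an \emph{automorphism} $\psi_\xi$ of $\mathcal{L}/\mathcal{L}_k$. First, the universal property of the free Lie algebra gives a unique Lie endomorphism $\psi_\xi$ with $\psi_\xi(X_i) = \xi_i$; because each $\xi_i$ has vanishing degree-zero part, $\psi_\xi$ preserves the degree filtration and descends to $\mathcal{L}/\mathcal{L}_k$. Its induced map on the abelianization $\mathcal{L}/\mathcal{L}_2 = H_{\K}$ is the matrix $A$, which is invertible by hypothesis; I would then argue, by an induction on the filtration degree (equivalently on the associated graded, which $\psi_\xi$ carries to itself through powers of $A$), that $\psi_\xi$ is bijective, exactly as in the algebra case of Lemma \ref{thm130}. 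For $k = \infty$ this is handled degreewise and passed to the inverse limit. This proves that the orbit map $\Psi \mapsto (\Psi(X_1), \dots, \Psi(X_n))$ from $\Aut_{\rm Lie}(\mathcal{L}/\mathcal{L}_k)$ to $\mathcal{F}$ is surjective.

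Transitivity then follows: given $(\xi_i), (\xi'_i) \in \mathcal{F}$, the automorphism $\Psi := \psi_{\xi'} \circ \psi_\xi^{-1}$ satisfies $\Psi(\xi_i) = \xi'_i$, and the corresponding $U \in \Aut_{\rm Hopf}(\hat{T}/\hat{T}_k)$ sends $\theta$ to $\theta'$, because $U(\theta(x_i)) = U(\exp \xi_i) = \exp(\xi'_i) = \theta'(x_i)$ and both $U\circ\theta$ and $\theta'$ are determined by their values on the free generators $x_i$. For freeness, suppose $U \cdot \theta = \theta$; then $\Psi(\xi_i) = \xi_i$ for all $i$. Since the degree-one parts of the $\xi_i$ span $H_{\K}$, the $\xi_i$ generate $\mathcal{L}/\mathcal{L}_k$ as a Lie algebra (any lift of a generating set of the abelianization generates a nilpotent Lie algebra), so $\Psi$ fixes a generating set, whence $\Psi = \id$ and $U = \id$. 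The main obstacle is the extension lemma of the second paragraph — showing that an invertible linear part forces bijectivity of $\psi_\xi$ — which requires the filtration induction and, in the $k=\infty$ case, care with the inverse limit; everything else is a formal consequence of the $\exp/\log$ and Hopf--Lie dictionaries.
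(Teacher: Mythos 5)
Your proposal is correct, but it takes a genuinely different route from the one the paper intends. The paper omits the proof, deferring to \cite[Theorem 1.3 (2)]{Ka1}, whose mechanism is direct: any expansion $\theta$ induces a filtered algebra isomorphism $\widehat{\K}[F] / (\Ker \epsilon)^k \cong \hat{T}/\hat{T}_k$ (this is exactly the fact quoted again in Section \ref{33773}), so for two group-like expansions one simply sets $U := \theta' \circ \theta^{-1}$, checks that group-likeness of $\theta, \theta'$ makes $U$ a coalgebra map, and gets existence and uniqueness of $U$ in one stroke. You instead bypass the completed group ring entirely: you use the $\exp$/$\log$ bijection between group-like and primitive elements to identify $\Theta_n^{\rm grp}$ with tuples $(\xi_1,\dots,\xi_n)$ of primitives with invertible linear part, transport the action through Quillen's isomorphism $\Aut_{\rm Hopf}(\hat{T}/\hat{T}_k) \cong \Aut_{\rm Lie}(\mathcal{L}/\mathcal{L}_k)$ \cite[Theorem A.3.3]{Q}, and reduce to simple transitivity of $\Aut_{\rm Lie}$ on free generating systems of the free nilpotent Lie algebra, whose key lemma is the Lie-algebra analogue of Lemma \ref{thm130} (invertible abelianized part forces bijectivity, by induction on the associated graded, where the graded piece of $\psi_\xi$ in degree $j$ is the map functorially induced by $A$, hence invertible). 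Your argument buys an explicit torsor description of $\Theta_n^{\rm grp}$ over the Lie automorphism group, consistent with the identifications \eqref{7577} and \eqref{75778}, and it works degreewise in the finite quotients with the inverse limit handled at the end; the price is the extra machinery ($\exp$/$\log$ and Quillen's correspondence) that the composition argument never needs, and two small points you leave implicit and should state: that group-like elements form a subgroup of $(\hat{T}/\hat{T}_k)^{\times}$, so that the $\theta$ reconstructed from a tuple is group-like on all of $F$ and not just on the generators $x_i$; and that injectivity of the restriction map $\Aut_{\rm Hopf} \ra \Aut_{\rm Lie}$ (part of Quillen's theorem) is what upgrades $\Psi = \id$ to $U = \id$ in the freeness step. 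Neither is a gap — both are standard — so the proof stands as a valid, somewhat longer alternative to the omitted one.
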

\noindent
The proof is almost the same as the original one, so we will omit the proof here. As a result, we have a bijection between $\Aut_{\rm Hopf} ( \hat{T} /\hat{T}_k) $ and $ \Theta_n^{\rm grp} $. According to Lemma \ref{yyyy}, we can describe the group structure on $ \Theta_n^{\rm grp}$ via the bijectivity, where the identity $\in \Theta_n^{\rm grp} $ is the above $\theta_{\exp} $.

Next, let us briefly mention symplectic expansions, which were first introduced in \cite{Ma}. Let $n$ be $2g$, and $\zeta \in F$ be $[x_1,x_2] \cdots [x_{2g-1},x_{2g}]$, and $\omega \in \hat{T}/\hat{T}_k$ be $\sum_{i=1}^g X_{2i-1} X_{2i} - X_{2i} X_{2i -1}$. {\it A symplectic expansion} is a group-like expansion $\theta: F \ra (\hat{T}/\hat{T}_k)^{\times}$ satisfying $\theta (\zeta)=\exp(-\omega)= \sum_{m=0}^{\infty} (-\omega)^m/m! $. Let $ \Theta_n^{\rm sym} $ be a subset composed of all the symplectic expansion, and $\Aut_{\rm \omega} ( \hat{T} /\hat{T}_k) $ be the subgroup of $\Aut_{\rm Hopf} ( \hat{T} /\hat{T}_k) $ of automorphisms preserving $\omega$, which is isomorphic to the subgroup $ \Aut_{\omega} ( \mathcal{L} /\mathcal{L} _k)$ in \eqref{seq37}. As shown in \cite{Ma}, $ \Theta_n^{\rm sym}$ is not empty. As an extension of \cite[Proposition 2.8.1]{KK}, if $k = \infty$, we can show that the restricted action of $\Aut_{\rm \omega} ( \hat{T} /\hat{T}_k) $ on $ \Theta_n^{\rm sym}$ is also free and transitive.

Furthermore, let us review the total Johnson map from \cite[Section 2.5]{KK}. Let $\Ker(\epsilon) \subset \K[F]$ be the augmentation ideal, and $\widehat{\K}[F]$ the completion algebra, i.e., $\widehat{\K}[F] := \lim_{\infty \leftarrow j} \K[F]/(\Ker(\epsilon) )^j$. As in \cite[Theorem 1.3]{Ka1}, any Magnus expansion $\theta$ gives rise to a filtered algebra isomorphism $\theta: \widehat{\K}[F] \cong \widehat{T}$. Any mapping class $\varphi \in \mathcal{M}_{g,1}$ can be regarded as being in $\mathrm{Aut}(F)$ via the natural action $\mathcal{M}_{g,1} \curvearrowright \pi_1(\Sigma_{g,1})= F $; thus, we uniquely have $T^{\theta} (\varphi)\in \Aut_{\rm alg}(\widehat{T} )$ such that $T^{\theta} (\varphi) \circ \theta = \theta \circ \varphi $. The map $T^{\theta}: \mathcal{M}_{g,1} \ra \Aut_{\rm alg}(\widehat{T} )$ is also called {\it the total Johnson map}. It is known (see, e.g., \cite{KK, KK2}) that this map is injective, and if $\theta$ is group-like (resp. symplectic), then the image of $T^{\theta} $ is contained in $ \Aut_{\rm Hopf} ( \hat{T} ) $ (resp. $\Aut_{\rm \omega} ( \hat{T} ) $); the composite of the restriction of $T^{\theta} $ on $\mathcal{I}_{g,1} \cup \{\textrm{Dehn twists}\}$ and the logarithm from $\mathrm{IA}_{\rm Hopf} ( \hat{T} ) $ are studied in \cite{KK, KK2,Ma, MT} for the case where $\K=\Q$ and $\theta$ is over $A=I_n$.

In contrast, thanks to Theorem \ref{thm1}, when $\K=\C$, we defined the logarithms from the set
$$\mathrm{SE}_k:= \{ \phi \in \mathrm{Aut}_{\rm Hopf} ( \hat{T} /\hat{T}_k) \mid \mathrm{Eig}(p_k (\phi)) \cap \{ z \in \C \mid 1= |z|\}=1 \} .$$
To conclude, we have succeeded in giving an extension of the composite $\mathrm{Log} \circ T^{\theta}$ from the preimage $(T^{\theta})^{-1}(\mathrm{SE}_k) \subset \mathcal{M}_{g,1}$. It might be an interesting exercise to determine the union $\cup_{\theta \in \Theta_{n}^{\rm grp}} \cup_{k \geq 1}^{\infty} (T^{\theta})^{-1}(\mathrm{SE}_k) \subset \mathcal{M}_{g,1} $. For example, if this inclusion $\subset$ is surjective, we can define a logarithm for every element of $\mathcal{M}_{g,1} $.

Now let us discuss some small applications to topological invariants. The paper \cite{GL} defines a class of homology cylinders over $\Sigma_{g,1}$, for which we will not give a definition here, and a certain monoid homomorphism,
$$\sigma_k: \{ M : \textrm{A homology cobordism over }\Sigma_{g,1} \} \lra \Aut_{0}( \mathcal{L} /\mathcal{L}_k)\cong \Aut_{\omega}(\hat{T} /\hat{T}_k ), $$
where $\mathrm{rk} \mathcal{L}=2g$. As an analogy, the author \cite{Nos} defines a knot invariant as the map
$$\sigma_k' :\{ \textrm{a knot } K\textrm{ in }S^3 \textrm{ with } \mathrm{deg} \Delta_K=2g\} \lra \mathrm{Out}_{0}( \mathcal{L} /\mathcal{L}_k) /{\rm conj}, $$
where $\mathrm{deg} \Delta_K$ means the degree of the Alexander polynomial of $K$ and the symbol $``{\rm /conj}" $ means the set of the conjugacy classes. The groups $ \Aut_{\omega}(\hat{T} /\hat{T}_k )$ and $ \mathrm{Out}_{0}( \mathcal{L} /\mathcal{L}_k)$ seem far from being computable; however, if $\sigma_k(M) $ lies in $\mathrm{SE}_k$, the composite $\ln \circ \sigma_k(M)$ is valued in the vector space $\mathrm{Der}_{\omega} ( \mathcal{L}/ \mathcal{L}_k \otimes \C) $. On the other hand, concerning another ${\sigma}_k'$, notice the equality,
\begin{equation}\label{seq379}
X- e^{-Y}X e^{Y} = X-\sum_{j:j \geq 1} [[ \cdots [ X, \underbrace{Y] \cdots ],Y}_{j \textrm{-times}}]/j! \in \mathrm{Der} ( \mathcal{L}/ \mathcal{L}_k \otimes \C)
\end{equation}
for any $X,Y \in \mathrm{Der} ( \mathcal{L}/ \mathcal{L}_k \otimes \C)$; see, e.g., \cite[Appendix (A.0.2)]{MT}. In conclusion, if we can define $\ln \circ \sigma_k'(K)$, the invariant is valued in the quotient linear space $\mathrm{Der}_{\omega} ( \mathcal{L}/ \mathcal{L}_k \otimes \C) /\mathrm{Inn}_{\omega} ( \mathcal{L}/ \mathcal{L}_k \otimes \C) $ modulo the formula \eqref{seq379} for any $X,Y \in \mathrm{Der}_{\omega} ( \mathcal{L}/ \mathcal{L}_k \otimes \C)$. Here, $\mathrm{Inn}_{\omega} ( \mathcal{L}/ \mathcal{L}_k \otimes \C) $ is the submodule consisting of inner derivations killing $ \omega$. Additionally, we remark that the quotient space modulo \eqref{seq379} is non-trivial. In fact, the space surjects on the abelianization of $\mathrm{Der}_{\omega} ( \mathcal{L}/ \mathcal{L}_k \otimes \C) $ which is related to the rational cohomology of $\mathrm{Out}(F)$; see the main theorem of \cite{Kon} and \cite[Section 8]{CKV}. For example, if $g \geq 6$ or $g=1$, the paper \cite{MSS} explicitly gives 1-cocycles of $\mathrm{Der}_{\omega} ( \lim_{ \infty \leftarrow k} \mathcal{L}/ \mathcal{L}_k \otimes \C) $ in terms of the Enomoto-Satoh trace. 

As in the Kontsevich knot invariant and the LMO invariant of homology 3-spheres, if we can define the logarithms of some invariants, we should be able to discover interesting properties and relations between invariants from the viewpoint of the logarithms. It seems reasonable to hope that the above composites $\ln \circ \sigma_k$ and $ \ln \circ \sigma_k'$ have similar properties.

\section{Two lemmas }
\label{333}
Here, we turn our attention to the proofs of the main theorems. As a preliminary, let us prepare two lemmas. Proposition \ref{prop4} below is inspired by \cite[\S 2]{Mo} and \cite{Mo2}. We will use the terminology in Section \ref{99414}. For $k,m \in \mathbb{N}$, notice from Lemma \ref{thm130} that the canonical projection $q_m: \mathrm{Aut}_{\bullet }( \hat{T}/\hat{T}_{m+1} ) \ra \mathrm{Aut}_{\bullet }(\hat{T}/\hat{T}_m ) $ is surjective, where $\bullet$ means either alg or Hopf. Recall from \eqref{7577} that any $\K$-homomorphism $\alpha: H_{\K} \ra \hat{T}_{m}/\hat{T}_{m+1}$ such that $p_k \circ \alpha: H_{\K} \ra H_{\K} $ is an isomorphism can be uniquely extended to an algebra isomorphism $ \tilde{\alpha}: \hat{T}/\hat{T}_{m+1} \ra \hat{T}/\hat{T}_{m+1} $; thus, the kernel of $q_m$ is $ \Hom_{\K} ( H_{\K}, \hat{T}_{m}/\hat{T}_{m+1} )$. Notice that $\mathrm{Aut}_{\bullet }( \hat{T}/\hat{T}_{m} ) $ is a closed subgroup of the matrix Lie group $ \mathrm{GL}( \hat{T}/\hat{T}_{m+1} ) $ and $q_m$ is continuous; hence, if $\K=\R $ or $\K =\C$, then $\mathrm{Aut}_{\bullet }( \hat{T}/\hat{T}_{m} ) $ is a Lie group and $q_m$ is smooth. In summary, we have the Lie group extensions,
\begin{equation}\label{seq388}
{\normalsize \xymatrix{
0\ar[r] & \Hom_{\K} ( H_{\K} , \hat{T}_{m}/\hat{T}_{m+1} ) \ar[r]^{E} & \Aut_{\rm alg}(\hat{T}/\hat{T}_{m+1}) \ar[r]^{q_m}& \Aut_{\rm alg}(\hat{T}/\hat{T}_{m}) \ar[r] & 1\\
0\ar[r] & \Hom_{\K} ( H_{\K} , \mathcal{P}(\hat{T}_{m}/\hat{T}_{m+1} )) \ar[r]^{E} \ar@{^{(}-_{>}}[u] & \Aut_{\rm Hopf}(\hat{T}/\hat{T}_{m+1}) \ar[r]^{q_m} \ar@{^{(}-_{>}}[u]& \Aut_{\rm Hopf}(\hat{T}/\hat{T}_{m}) \ar[r] \ar@{^{(}-_{>}}[u]& 1.}
}\end{equation}
Here, $E$ is the restriction of the map defined in \eqref{7577}.
The sequence \eqref{seq388} over $\Q$ follows from Proposition 2.3 in \cite{Mo} and is not central; however, to see a centrality, we only need to recall the subgroups in \eqref{777}; we have
\begin{lem}\label{prop4}
The horizontal sequences of the restriction of the diagram \eqref{seq388},
\begin{equation}\label{seq4}
{\normalsize \xymatrix{
0\ar[r] & \Hom_{\K} ( H_{\K} , \hat{T}_{m}/\hat{T}_{m+1} ) \ar[r]^{E} & \mathrm{IA}_{\rm alg}(\hat{T}/\hat{T}_{m+1}) \ar[r]^{q_m}& \mathrm{IA}_{\rm alg}(\hat{T}/\hat{T}_{m}) \ar[r] & 1\\
0\ar[r] & \Hom_{\K} ( H_{\K} , \mathcal{P}(\hat{T}_{m}/\hat{T}_{m+1} )) \ar[r]^{E} \ar@{^{(}-_{>}}[u] & \mathrm{IA}_{\rm Hopf}(\hat{T}/\hat{T}_{m+1}) \ar[r]^{q_m} \ar@{^{(}-_{>}}[u]& \mathrm{IA}_{\rm Hopf}(\hat{T}/\hat{T}_{m}) \ar[r] \ar@{^{(}-_{>}}[u]& 1,}
}\end{equation}
are central extensions. In particular, $\mathrm{IA}_{\rm \bullet}(\hat{T}/\hat{T}_{m})$ is a nilpotent Lie group and is contractible.
\end{lem}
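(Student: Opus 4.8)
The plan is to regard the exactness and surjectivity of both horizontal rows as already in hand: surjectivity of $q_k$ is recorded just before the statement, and the identification of $\ker q_k$ with $\Hom_{\K}(H_{\K}, \hat{T}_m/\hat{T}_{m+1})$ (respectively its primitive-valued subspace) via $E$ is exactly \eqref{7577}--\eqref{75778}. Thus the only substantive assertion is that these extensions are \emph{central}. I would prove this by a direct degree count on the generators $x_1, \dots, x_n$, and then read off nilpotency and contractibility as formal consequences of the resulting tower.

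For centrality, take $z = E(t)$ in the kernel, so $z(x_i) = x_i + t(x_i)$ with $t(x_i) \in \hat{T}_m/\hat{T}_{m+1}$, and let $g \in \mathrm{IA}_{\rm alg}(\hat{T}/\hat{T}_{m+1})$ be arbitrary, so that $g(x_i) = x_i + P_i$ with $P_i \in \hat{T}_2/\hat{T}_{m+1}$. I would evaluate both composites on a generator. Since $t(x_i)$ has degree exactly $m$ and $g$ is an algebra homomorphism inducing the identity on $\hat{T}_1/\hat{T}_2$, applying $g$ to $t(x_i)$ alters it only by terms of degree $\geq m+1$, which vanish in $\hat{T}/\hat{T}_{m+1}$; hence $g(z(x_i)) = g(x_i) + g(t(x_i)) = g(x_i) + t(x_i)$. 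Conversely, applying the algebra homomorphism $z$ to a monomial of degree $d \geq 2$ appearing in $P_i$, any factor replaced by its degree-$m$ correction $t(\cdot)$ yields a term of degree $\geq (d-1) + m \geq m+1$, so $z$ fixes $P_i$ modulo $\hat{T}_{m+1}$ and $z(g(x_i)) = x_i + t(x_i) + P_i = g(x_i) + t(x_i)$. The two agree, so $z$ is central. The same argument (or Lemma \ref{yyyy} with $A = B = \mathrm{id}$, where $t$ being supported in top degree kills every intermediate contribution and leaves $w_m = u_m + t_m$ in both orders) applies verbatim to generators landing in $\mathcal{P}(\hat{T}/\hat{T}_k)$; and since the Hopf rows are subgroups of the algebra rows, their centrality is inherited.

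Granting centrality, the final clause is formal. Beginning from $\mathrm{IA}_{\bullet}(\hat{T}/\hat{T}_2) = \{1\}$ and composing the central extensions \eqref{seq4} through degree $m$ exhibits $\mathrm{IA}_{\bullet}(\hat{T}/\hat{T}_m)$ as a finite iterated central extension by the abelian vector groups $\Hom_{\K}(H_{\K}, \hat{T}_j/\hat{T}_{j+1})$; as a central extension of a nilpotent group by an abelian one is nilpotent, the group is nilpotent of class $\leq m-2$, and it is a Lie group as a closed subgroup of $\GL(\hat{T}/\hat{T}_m)$ (for $\K = \R, \C$). For contractibility I would invoke $E$ directly: by \eqref{7577} it is a polynomial, hence smooth, bijection with smooth inverse of $\mathrm{IA}_{\rm alg}(\hat{T}/\hat{T}_m)$ onto the vector space $\Hom_{\K}(H_{\K}, \hat{T}_2/\hat{T}_m)$, and by \eqref{75778} it carries $\mathrm{IA}_{\rm Hopf}$ onto a linear subspace; a vector space is contractible. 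Equivalently, the tower makes the group simply connected nilpotent, so $\exp$ is a global diffeomorphism onto its Lie algebra.

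I expect the only genuine work to be the degree bookkeeping in the centrality step; the inequality $(d-1) + m \geq m+1$ for $d \geq 2$, together with $\deg t(x_i) = m$, is precisely what collapses all interaction terms modulo $\hat{T}_{m+1}$, after which everything else is a formal consequence of the extension structure.
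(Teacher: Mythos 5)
Your proposal is correct and takes essentially the same approach as the paper: the paper likewise proves centrality by comparing the two composites $\alpha \circ E(f)$ and $E(f) \circ \alpha$ via the projections $q_k$ and $r_k\colon \hat{T}/\hat{T}_{m+1} \to \hat{T}_m/\hat{T}_{m+1}$, which is precisely your degree count showing every interaction term lands in $\hat{T}_{m+1}$ (and the Hopf case is inherited by restriction). Your explicit arguments for the ``in particular'' clause --- nilpotency from the iterated central extensions and contractibility from the polynomial bijection $E$ onto a vector space, respecting the linear subspace of \eqref{75778} --- simply spell out what the paper leaves implicit.
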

\begin{proof} 
We will show the centrality of only the upper sequence. For $f \in \Hom ( H_{\K} , \hat{T}_{m}/\hat{T}_{m+1} )$ and $\alpha \in \mathrm{IA}_{\rm alg}(\hat{T}/\hat{T}_{m+1}) $, by \eqref{777}, it suffices to show the identity $ ( \alpha \circ E(f))^{\sim }= (E(f) \circ \alpha )^{\sim }$. Let $r_k: \hat{T}/\hat{T}_{m+1}\ra \hat{T}_{m}/\hat{T}_{m+1} $ be the projection. Then, $ r_m(( \alpha \circ E(f))^{\sim })=r_m( \alpha )^{\sim } + r_m( E(f))^{\sim } =r_m( (E(f) \circ \alpha )^{\sim }) $ by the definition of $\mathrm{IA}_{\rm alg}(\hat{T}/\hat{T}_{m+1}) $. Since $q_m( \alpha \circ E(f)) = q_m( \alpha)=q_m(E(f) \circ \alpha)$ by the definition of $f$, we have the required identity.
\end{proof}

Next, let us change our focus to exponential solvable Lie groups. Let $G$ be a real solvable Lie group with Lie algebra $\mathfrak{g}$. If the exponential map $ \mathfrak{g} \ra G$ is diffeomorphic, $G$ is said to be {\it exponential solvable}. For example, every simply-connected nilpotent Lie group is exponentially solvable. As an application, let us consider the situation that the additive Lie group $\R$ smoothly acts on a Lie group $N$ with Lie algebras $ \mathfrak{n} $. Then, the action defines the semi-direct product $ N\rtimes \R$ and induces a representation $ \tau :\R \ra \mathrm{GL}(\mathfrak{n})$. For $x \in \mathfrak{n} $, let $T(x) \subset \R$ be the subgroup of $\R$ fixing $x$. We shall cite the following: 
\begin{lem}
[{A special case of \cite[Theorem 5]{MM}}]\label{thm10} Let $N$ be a simply-connected nilpotent Lie group and $G = N \rtimes \R$ be the semidirect product associated with $\tau$. Then, $G$ is exponential solvable if and only if $T(x) =\{0\}$ or $T(x)=\R$ for each $x \in \mathfrak{n} $.
\end{lem}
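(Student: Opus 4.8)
The plan is to reduce the statement to the classical eigenvalue criterion for exponential solvable groups: a connected, simply connected solvable Lie group $G$ with Lie algebra $\mathfrak{g}$ (and here $G = N \rtimes \R$ is such a group) is exponential solvable if and only if for every $X \in \mathfrak{g}$ the operator $\mathrm{ad}(X)\colon \mathfrak{g} \to \mathfrak{g}$ has no eigenvalue in $i\R \setminus \{0\}$. Granting this, I would first compute the spectrum of $\mathrm{ad}(X)$ on $\mathfrak{g} = \mathfrak{n} \oplus \R e$, where $e$ generates the $\R$-factor and $[e,y] = D(y)$ with $D := \frac{d}{dt}\tau(t)\big|_{t=0}$. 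For $X = x + se$ one checks that $\mathrm{ad}(X)$ sends $\mathfrak{g}$ into $\mathfrak{n}$ and annihilates $\mathfrak{g}/\mathfrak{n}$, so in block-triangular form its spectrum is $\{0\}$ together with that of $\mathrm{ad}_{\mathfrak{n}}(x) + sD$ acting on $\mathfrak{n}$.

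Next I would show $\mathrm{Spec}(\mathrm{ad}_{\mathfrak{n}}(x) + sD) = s\cdot \mathrm{Spec}(D)$. The key point is that the lower central series of $\mathfrak{n}$ is preserved by every derivation, while the inner derivation $\mathrm{ad}_{\mathfrak{n}}(x)$ strictly raises the filtration degree and is therefore nilpotent; hence $\mathrm{ad}_{\mathfrak{n}}(x) + sD$ preserves the filtration and induces $sD$ on the associated graded, so its characteristic polynomial equals that of $sD$. Combined with the previous step, $\mathrm{ad}(X)$ has a nonzero purely imaginary eigenvalue for some $X$ if and only if $D$ itself does (take $s=1$; any $s \neq 0$ preserves the imaginary axis, while $s=0$ contributes nothing). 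Thus the criterion reduces the claim to: $G$ is exponential solvable if and only if $\mathrm{Spec}(D) \cap (i\R\setminus\{0\}) = \emptyset$.

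Finally I would translate this spectral condition into the stated condition on the stabilizers $T(x) = \{t \in \R : e^{tD}x = x\}$, using $\tau(t) = e^{tD}$. Each $T(x)$ is a closed subgroup of $\R$, hence equals $\{0\}$, $\R$, or $\alpha\Z$ with $\alpha > 0$; so the hypothesis "$T(x) \in \{\{0\},\R\}$ for all $x$" just says no $x$ has a discrete nontrivial stabilizer. If $D$ has an eigenvalue $i\beta$ with $\beta \neq 0$, a real vector $x$ built from a genuine eigenvector satisfies $e^{tD}x = x$ exactly when $t\beta \in 2\pi\Z$, giving $T(x) = (2\pi/\beta)\Z$, a discrete nontrivial stabilizer. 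Conversely, if $\mathrm{Spec}(D)$ meets $i\R$ only at $0$ and $e^{\alpha D}x = x$ for some $\alpha \neq 0$, then decomposing $x$ over the generalized eigenspaces $V_\mu$ of $D$ and using that $e^{\alpha D}|_{V_\mu} - \mathrm{id}$ is invertible whenever $e^{\alpha\mu}\neq 1$ forces the components of $x$ into eigenspaces with $\mu \in (2\pi i/\alpha)\Z$, i.e.\ $\mu = 0$; a short nilpotency argument on $V_0$ then yields $Dx = 0$, whence $T(x) = \R$.

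The step requiring the most care is this last equivalence, specifically handling the nilpotent (Jordan) part of $D$ in both directions: one must use an honest eigenvector rather than a generalized one to guarantee a genuinely periodic orbit, and conversely one must extract $Dx = 0$ from $e^{\alpha D}x = x$ on the generalized $0$-eigenspace, where $e^{\alpha D} - \mathrm{id}$ is only $\alpha D$ times a unipotent factor, by applying a suitable power of the nilpotent part. The spectral reduction via the lower central series is routine once set up, and the eigenvalue criterion is quoted as a black box.
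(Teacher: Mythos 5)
Your proposal is correct, but there is nothing in the paper to compare it against line by line: the paper does not prove this lemma at all, importing it wholesale as a special case of \cite[Theorem 5]{MM}. So what you have done is supply an actual proof where the paper has only a citation, and your argument holds up. The reduction to Dixmier's classical criterion (a simply connected solvable group is exponential if and only if no $\mathrm{ad}(X)$ has a nonzero purely imaginary eigenvalue) is legitimate, since $G=N\rtimes\R$ is indeed simply connected and solvable; the block-triangular computation of $\mathrm{ad}(x+se)$ along the ideal $\mathfrak{n}$, the filtration argument via the lower central series (every derivation preserves it, $\mathrm{ad}_{\mathfrak{n}}(x)$ strictly raises it, hence $\mathrm{Spec}(\mathrm{ad}_{\mathfrak{n}}(x)+sD)=s\cdot\mathrm{Spec}(D)$), and the two-way translation between $\mathrm{Spec}(D)\cap(i\R\setminus\{0\})\neq\emptyset$ and the existence of a discrete nontrivial stabilizer $T(x)=\alpha\Z$ are all sound, including the two Jordan-part caveats you flag (using a genuine eigenvector $v$ and $x=v+\bar v$ to get a periodic orbit, and extracting $Dx=0$ from $e^{\alpha D}x=x$ on the generalized $0$-eigenspace via the unipotent factor of $e^{\alpha D}-\mathrm{id}$). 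The trade-off between the two routes is this: the paper's citation leans on Moskowitz--W\"ustner's more general exponentiality theorem and keeps the text short, whereas your route substitutes a different but comparably deep black box (Dixmier's criterion) plus an elementary spectral reduction; a genuine gain of your version is that it turns the paper's Remark \ref{rem10} --- which states only that the absence of nonzero purely imaginary eigenvalues of $d\tau$ is \emph{sufficient} for the stabilizer condition --- into a transparent equivalence, making explicit exactly where the hypothesis on $\mathrm{Eig}(p_k(\Phi))$ enters the proofs of Theorems \ref{thm1} and \ref{thm51}. One honest caveat: since Dixmier's criterion is itself roughly of the same depth as the quoted result, your argument is best described as a careful reduction rather than a from-scratch proof, but you declare the black box openly, so this is not a gap.
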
 
\begin{rem}\label{rem10}
The Jordan decomposition theorem implies that the condition concerning $T(x)$ in Lemma \ref{thm10} holds if no eigenvalue of $ \tau (1) \in \mathrm{GL}(\mathfrak{n})$ lies in $\{ b \in \C^{\times} \mid b\neq 1,|b|=1 \}$. This is equivalent to saying that no eigenvalue of the differential $d \tau (\bullet) \in \mathrm{End}(\mathfrak{n} \otimes {\C})$ lies in $\{ \sqrt{-1} a \mid a \in \R^{\times}\}$ (cf. the concept of weights on exponential solvable Lie groups; see, e.g., \cite{MM}). 
\end{rem}

\section{Proofs of Theorems \ref{thm1}, \ref{thm2}, \ref{thm7}, and \ref{thm51}}
\label{99se33}
Before giving the proofs, we will briefly examine the decomposition \eqref{pp9} below. For $\lambda \in \C \setminus \{0\}$ and $ \ell \in \N$, let $J_{\lambda}(\ell) \in \mathrm{Mat}( \ell \times \ell; \C)$ be the Jordan block of size $\ell$ and eigenvalue $\lambda $. As is known (see, e.g., \cite[Theorem 2]{MV}), the tensor product of $ J_{\lambda}(\ell)$ and $J_{\mu}(n) $ in $\mathrm{Mat}( n\ell \times n\ell; \C) $ has the following decomposition:
\begin{equation}\label{pp9} J_{\lambda}(\ell) \otimes_{\C} J_{\mu}(n) \cong \bigoplus_{w: 1 \leq w \leq \mathrm{min}(\ell,n)} J_{ \lambda \mu}(\ell + n- 2w-1) . \end{equation}

In addition, let us observe the action of $ \GL(H_{\K}) $ on the kernel $ \mathrm{IA}_{\bullet}(\hat{T}/\hat{T}_{k})$ in \eqref{777}. By Lemma \ref{thm130}, the Lie algebra of $\mathrm{IA}_{\rm alg} (\hat{T}/\hat{T}_{k}) $ is linearly isomorphic to $\oplus_{j=2}^{k-1}\Hom_{\K}( H_{\K},\hat{T}_{j}/\hat{T}_{j+1} )$. Here, we consider the action of $ \GL(H_{\K} )$ on $\Hom( H_{\K}, \hat{T}_{j}/\hat{T}_{j+1} )$ defined by 
\begin{equation}\label{seq8}
(Af)(u) := A^{\otimes j }f(A^{-1} u) \ \ \ \ \ \ \ (A \in \GL( H_{\K}), \ u \in H_{\K}, \ f \in \Hom( H_{\K}, \hat{T}_{j}/\hat{T}_{j+1} )).
\end{equation}
From Lemma \ref{yyyy} and the semi-direct products \eqref{777}, the sum of this action \eqref{seq8} coincides with the action of $ \GL(H_{\K}) $ on $\mathrm{IA}_{\rm alg}(\hat{T}/\hat{T}_{k})$, although this coincidence was shown in the case $\K=\Q$ (see, e.g., \cite{Mo,Mo2}). Hence, the restricted action of $ \GL(H_{\K} )$ on $ \mathrm{IA}_{\rm Hopf }(\hat{T}/\hat{T}_{k}) = \oplus_{j=1}^{k-1} \Hom( H_{\K}, \mathcal{P}(\hat{T}_{j}/\hat{T}_{j+1} ) )$ can be regarded as a subrepresentation of $\oplus_{j=2}^{k-1}H_{\K}^* \otimes H^{\otimes j}_{\K} $.
\begin{proof}
[Proof of Theorem \ref{thm1}] Let $\Phi \in \Aut_{\rm Lie}( \mathcal{L}/\mathcal{L}_{k}) \cong \Aut_{\rm Hopf}( \hat{T}/\hat{T}_{k})$, and let $\K$ be $\C$. If $p_k(\Phi)$ is the identity matrix, then $\phi$ is the zero matrix and $ \Phi $ lies in $ \mathrm{IA}_{\rm Hopf} (\hat{T}/\hat{T}_{m}) $. Since any contractible nilpotent Lie group is exponential solvable as mentioned in Section \ref{333}, so is $\mathrm{IA}_{\rm Hopf} (\hat{T}/\hat{T}_{m}) $, by Lemma \ref{prop4}. Hence, it is enough to define $D_{\Phi}$ to be $\exp^{-1} (\Phi)$. 

Next, we suppose that $p_k(\Phi) $ is not the identity. Let $ K$ be the 1-dimensional Lie subgroup of $ \mathrm{GL}( H_{\C})$ generated by $p_k(\Phi)$, that is, $K= \{ \exp(t \phi) \}_{t \in \R}$. From the definition of the exponential solvability of $p_k(\Phi)$, $K$ is diffeomorphic to $\R$ in $\mathrm{GL}( H_{\C}). $ The restricted action from the semi-direct product \eqref{777} is regarded as a representation $\mathfrak{s}: K \ra \mathrm{IA}_{\rm Hopf }(\hat{T}/\hat{T}_{k}) \subset \GL(\hat{T}/\hat{T}_{k}) $, which the above discussion shows is a subrepresentation of $\R$ on $\oplus_{j=2}^{k-1} H_{\C}^* \otimes H^{\otimes j}_{\C} $. Notice from the decomposition \eqref{pp9} that any eigenvalue of $\mathfrak{s}(\exp(\phi) ) $ lies in $ \mathrm{Eig}( p_k(\Phi) ) $, and, therefore, does not intersect the set $\{ b \in \C^{\times} \mid b\neq 1,|b|=1 \}$ under the assumption of $ p_k(\Phi)$. From Remark \ref{rem10}, the action of $K $ on the Lie algebra of $\mathrm{IA}_{\rm Hopf }(\hat{T}/\hat{T}_{k}) $ satisfies the condition in Lemma \ref{thm10}; thus, $\mathrm{IA}_{\rm Hopf }(\hat{T}/\hat{T}_{k})\rtimes K $ is an exponential solvable Lie group and contains $\Phi$. Hence, if we define $D_{\Phi}$ to be $\exp^{-1} (\Phi)$, then $D_{\Phi}$ satisfies the required conditions $\exp(D_\Phi )=\Phi $ and $ p_k(D_\Phi )= \phi$ by construction. Finally, uniqueness follows from the bijectivity of $\exp$.
\end{proof}

\begin{proof}[Proof of Theorem \ref{thm51}] If we replace $\Aut_{\rm Hopf} $ by $\Aut_{\rm alg}$ in the above proof,
the same discussion runs. Thus, we omit the details. 
\end{proof} 

Next, to prove Theorem \ref{thm2}, we review some of the results from \cite{Mo2}. Let $\mathrm{rk}(\mathcal{L})=2g$, and $ \mathrm{IA}(\mathcal{L}/ \mathcal{L}_{k} \otimes \K)$ be $\Ker(p_k) \cap \mathrm{Aut}_{\rm Lie}(\mathcal{L}/ \mathcal{L}_{k} \otimes \K) $. Consider the kernel of the bracketing:
\begin{equation}\label{seq63}
\mathfrak{h}_{g,1}^{\K}(k):=\Qer([,]: H_{\K} \otimes_{\Q} (\mathcal{L}_{k}/ \mathcal{L}_{k+1}) \lra (\mathcal{L}_{k+1}/ \mathcal{L}_{k+2})\otimes_{\Q} \K) ,
\end{equation}
Under the identification $H_{\K} \cong H_{\K }^* $ as an Sp-module, 
let us identify $\Hom (H_{\K} , \mathcal{L}_{k}/ \mathcal{L}_{k+1}\otimes \K)$ with $H_{\K} \otimes_{\Q} \mathcal{L}_{k}/ \mathcal{L}_{k+1} $;
as shown in \cite[(3)]{Mo2}, if $\K=\Q$, then the intersection $\mathrm{Im}(E) \cap \Aut_0( \mathcal{L}/ \mathcal{L}_{k+1} \otimes \K) $ is equal to the $\mathfrak{h}_{g,1}^{\K}(k)$. Further, the equality holds over $\K=\C$, since $\mathrm{IA}(\mathcal{L}/ \mathcal{L}_{k+1} \otimes \Q) \subset \mathrm{IA}(\mathcal{L}/ \mathcal{L}_{k+1} \otimes \R) $ is dense, and $ \mathrm{IA}(\mathcal{L}/ \mathcal{L}_{k+1} \otimes \R) \subset \mathrm{IA}(\mathcal{L}/ \mathcal{L}_{k+1} \otimes \C) $ is a canonical complexification. Meanwhile, any $f \in \Hom (H_{\K} , \mathcal{L}_{k}\otimes \K) \cong H_{\K} \otimes (\mathcal{L}_{k}\otimes \K) $ can be uniquely extended to a derivation $D_f$ in $\mathrm{Der}(\mathcal{L}/ \mathcal{L}_{k} \otimes \K)$; we have an injection $ \oplus_{j=0}^{k-1} \mathfrak{h}_{g,1}^{\K}(j) \hookrightarrow \mathrm{Der}(\mathcal{L}/ \mathcal{L}_{k} \otimes \C)$ that sends $f$ to $D_f$, whose image is a Lie subalgebra. According to \cite[Theorem 3.3]{Mo2}, if $\K=\Q$, the Lie algebras of $ \Aut_0(\mathcal{L}/ \mathcal{L}_{k} \otimes \K) $ and $ \Aut_0(\mathcal{L}/ \mathcal{L}_{k} \otimes \K)\cap \mathrm{IA}(\mathcal{L}/ \mathcal{L}_{k} \otimes \K) $ are isomorphic to the subalgebras $ \oplus_{j=0}^{k-1} \mathfrak{h}_{g,1}^{\K}(j) $ and $\oplus_{j=1}^{k-1} \mathfrak{h}_{g,1}^{\K}(j) $, respectively. The original statement is given only over $\Q$; however, we can similarly verify that the isomorphisms hold over $\R$ and $\C$.
\begin{proof}
[Proof of Theorem \ref{thm2}] Since $ \mathfrak{h}_{g,1}^{\C}(k) = \mathrm{Im}(E) \cap \Aut_0( \mathcal{L}/ \mathcal{L}_{k} \otimes \C ) $ as above, the extension \eqref{seq4} means that $ \mathrm{IA}(\mathcal{L}/ \mathcal{L}_{k} \otimes \C)\cap \Aut_0( \mathcal{L}/ \mathcal{L}_{k} \otimes \C) $ is contractible. Since it is nilpotent, it is an exponential solvable Lie group. Hence, the same discussion as in the proof of Theorem \ref{thm1} applies if we replace $\mathrm{IA}_{\rm Hopf }(\hat{T}/\hat{T}_{k}) $ by $ \mathrm{IA}(\mathcal{L}/ \mathcal{L}_{k} \otimes \C)\cap \Aut_0( \mathcal{L}/ \mathcal{L}_{k} \otimes \C) $. 
\end{proof}

It remains to give the proofs of Lemma \ref{yyy} and Theorem \ref{thm7}:
\begin{proof}
[Proof of Lemma \ref{yyy}] Let $k \in \mathbb{N}$ be arbitrary. We may suppose that $p_k(\Phi) $ is not the identity. As in the above proof, consider the exponential solvable Lie group $\mathrm{IA}_{\rm Hopf }(\hat{T}/\hat{T}_{k}) \rtimes K $. In general, it is known (see, e.g., \cite[Lemma 1.1 or Sections 1.3 and 1.9]{Ro}) that the BCH formula in any exponential solvable Lie group $G$ converges for any element of the Lie algebra $\mathfrak{g}$. Hence, the BCH formula \eqref{xyx} is in the Lie algebra of $\mathrm{IA}_{\rm Hopf }(\hat{T}/\hat{T}_{k}) \rtimes K$ converges for any $ X,Y$.
\end{proof}
\begin{proof}
[Proof of Theorem \ref{thm7}] Let $\Phi \in \Aut_{\rm Lie}( \mathcal{L}/\mathcal{L}_{k}) \cong \Aut_{\rm Hopf}( \hat{T}/\hat{T}_{k})$ such that every eigenvalue of $p_k(\Phi)$ is one. If we can show the existence of $N_k \in \N$ satisfying $ ( \Phi - \mathrm{id})^{N_k} =0 $ with $\K=\C$, then the logarithm $\mathrm{Log}$ converges, while $\ln(\Phi)= \mathrm{Log} (\Phi) $ immediately follows from the uniqueness in Theorem \ref{thm1}. Here, the statement in the case $k= \infty$ is due to the inverse limit of $\ln(\Phi)= \mathrm{Log} (\Phi) $ according to $k \ra \infty.$

As for the proof of the existence of $N_k$, let $\phi \in \mathfrak{gl}( \hat{T}/\hat{T}_{2} )$ be $ \mathrm{Log}(p_k(\Phi))= -\sum_{i=1}^{n-1}( p_k(\Phi) - I_n)^i/i$. Let $ \mathfrak{IA} \subset \mathfrak{gl}( \hat{T}/\hat{T}_{k} )$ and $\mathfrak{k} = \langle \phi \rangle $ be the Lie algebras of the nilpotent Lie group $ \mathrm{IA}_{\rm alg }(\hat{T}/\hat{T}_{k}) $ and of $ K= \langle p_k(\Phi)\rangle $, respectively. We claim that any $(Y,\phi) \in \mathfrak{IA} \rtimes \mathfrak{k}$ ensures $n_Y \in \mathbb{N}$ such that $(Y,\phi )^{n_Y} v=0$ for any $v \in \hat{T}/\hat{T}_{k} $. By the decomposition \eqref{pp9}, if $ d \geq \dim \hat{T}/\hat{T}_{k}$, then $ (0,\phi)^d (\hat{T}/\hat{T}_{k})=0$. Thus, by the Poincar\'{e}-Birkhoff-Witt theorem, we have that $ X \in \mathfrak{IA} $ satisfies $(Y,\phi )^d v= X v$ for any $v \in \hat{T}/\hat{T}_{k}. $ Notice from Engel's theorem on the nilpotent $\mathfrak{IA} $ (or Lemma \ref{ppo2}) that any $Z \in \mathfrak{IA} $ admits $n_Z \in \mathbb{N}$ such that $Z^{n_Z} (\hat{T}/\hat{T}_{k}) =0$; hence, to complete our proof of the claim, we may define $n_Y$ to be $n_Z d$. Then, from this claim and Lemma \ref{ppo2} below, we can choose an appropriate basis of $ \hat{T}/\hat{T}_{k} $ such that the semi-direct product $\mathrm{IA}_{\rm alg }(\hat{T}/\hat{T}_{k}) \rtimes K$ is a Lie subgroup of $U( \hat{T}/\hat{T}_{k})$, where $U( \hat{T}/\hat{T}_{k})$ is the subgroup of $ \GL( \hat{T}/\hat{T}_{k} )$ consisting of upper right triangular matrices with only eigenvalue one. Since $\Phi \in \mathrm{IA}_{\rm alg }(\hat{T}/\hat{T}_{k}) \rtimes K \subset U( \hat{T}/\hat{T}_{k})$, we have $(\Phi - \mathrm{id})^{\dim \hat{T}/\hat{T}_{k}}(\hat{T}/\hat{T}_{k}) =0$, as required.
\end{proof}
\begin{lem}
[{A corollary of Engel's theorem. See, e.g., \cite[Corollary 11.11]{SW}}]\label{ppo2}Let $V$ be a real vector space with $ 1 \leq \dim_{\R } V < \infty$, and $L$ be a Lie subalgebra of $\mathfrak{gl}( V) $. If any $X \in L$ admits $n_X \in \mathbb{N}$ satisfying $ X^{n_X} V=0$, then we can choose a basis $\{ v_i\}_{ i=1}^{\dim V}$ of $V$ such that $L$ is contained in $\mathfrak{n}( V)$. Here, $\mathfrak{n}( V)$ is the Lie subalgebra $\{ (n_{ij})_{ 1 \leq i,j \leq \dim V }\in \mathfrak{gl}( V)= \mathrm{Mat}(\dim V \times \dim V ;\R ) \mid n_{ij} =0 \ \ (i \geq j)\}$.
\end{lem}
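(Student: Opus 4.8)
The hypothesis that every $X \in L$ satisfies $X^{n_X}V = 0$ says exactly that $L$ consists of nilpotent endomorphisms of $V$, so the statement is precisely Engel's theorem and the plan is to reproduce its standard proof. The concrete target is a complete flag $0 = V_0 \subset V_1 \subset \cdots \subset V_{\dim V} = V$ with $L V_i \subseteq V_{i-1}$ for all $i$; once such a flag is in hand, choosing $v_i \in V_i \setminus V_{i-1}$ realizes every element of $L$ as a strictly upper triangular matrix in the basis $\{v_i\}$, which is exactly the assertion $L \subseteq \mathfrak{n}(V)$.

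The heart of the argument is the claim that, whenever $V \neq 0$, there is a nonzero vector $v$ with $Lv = 0$. First I would record the elementary fact that if $X \in \mathfrak{gl}(V)$ is nilpotent then $\mathrm{ad}(X)$ is nilpotent on $\mathfrak{gl}(V)$: write $\mathrm{ad}(X)$ as the difference of left and right multiplication by $X$, which commute and are each nilpotent, and expand the power by the binomial theorem. Granting this, I would prove the common-null-vector claim by induction on $\dim L$. Choosing a maximal proper subalgebra $K \subsetneq L$, the adjoint action makes $K$ act by nilpotent operators on the smaller space $L/K$; the inductive hypothesis supplies a nonzero class annihilated by $\mathrm{ad}(K)$, which produces $y \in L \setminus K$ with $[K,y] \subseteq K$. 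Maximality then forces $K$ to be a codimension-one ideal with $L = K \oplus \R y$.

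Next I would set $W := \{ v \in V \mid K v = 0\}$, which is nonzero by the inductive hypothesis applied to $K$ (again a Lie algebra of nilpotent operators of smaller dimension). Using that $K$ is an ideal, one checks that $W$ is $L$-invariant: for $k \in K$, $X \in L$, $v \in W$ one has $k(Xv) = [k,X]v + X(kv) = 0$, since $[k,X] \in K$ and $kv = 0$. In particular $y$ preserves $W$ and acts nilpotently there, so it has a nonzero kernel vector $v \in W$; then $K v = 0$ and $y v = 0$ together give $L v = 0$, completing the claim. Finally I would bootstrap the flag by induction on $\dim V$: the common null vector gives $V_1 = \R v$ with $L V_1 = 0$, and $L$ acts by nilpotent operators on $V/V_1$, so the inductive hypothesis yields a flag downstairs whose preimage extends $V_1$ to the desired full flag; reading off an adapted basis finishes the proof.

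The main obstacle is the codimension-one ideal step. Verifying that $\mathrm{ad}(X)$ is nilpotent and then extracting $y$ together with the ideal structure from maximality of $K$ is the only genuinely nontrivial part of the argument, and it is where the hypothesis that all elements (not merely a single operator) be nilpotent is truly used. Everything after it — the invariance of $W$, the passage to $V/V_1$, and the translation of the flag into triangular form — is routine bookkeeping.
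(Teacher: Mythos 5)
Your proof is correct and is precisely the standard Engel's-theorem argument --- nilpotency of $\mathrm{ad}(X)$ via commuting left/right multiplications, a maximal proper subalgebra forced by maximality to be a codimension-one ideal, the common null vector $v$ with $Lv=0$, and the inductive construction of a full flag read off as strict upper triangularity --- which is exactly the content of the reference the paper cites in place of a proof (\cite[Corollary 11.11]{SW}); the paper itself supplies no argument beyond that citation. The only cosmetic omission is the trivial base case $\dim L \leq 1$ of the induction (and the remark that the induction on $\dim L$ is applied to the \emph{image} of $K$ in $\mathfrak{gl}(L/K)$, whose dimension is at most $\dim K < \dim L$), neither of which affects correctness.
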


\subsection*{Acknowledgments}
The author sincerely expresses his gratitude to Nariya Kawazumi, Yusuke Kuno, and Masatoshi Sato for giving him valuable comments. 
He gratefully acknowledges many helpful suggestions of the referee.
The work was partially supported by JSPS KAKENHI, Grant Number 00646903.

\normalsize

\vskip 1pc

\normalsize

\noindent
Department of Mathematics, Tokyo Institute of Technology
2-12-1
Ookayama, Meguro-ku Tokyo 152-8551 Japan

\end{document}